\newtheorem{theorem}{Theorem}[section]
\newtheorem{lemma}[theorem]{Lemma}
\newtheorem{prop}[theorem]{Proposition}
\newtheorem{thm}{Theorem}
\newtheorem{coro}[thm]{Corollary}
\newtheorem{corollary}[theorem]{Corollary}
\theoremstyle{definition}
\newtheorem{example}[theorem]{Example}
\newtheorem*{notation*}{Notation}
\newcommand{\cd}{\operatorname{cd}}
\newcommand{\charac}{\operatorname{char}}
\newcommand{\len}{\operatorname{len}}
\newcommand{\mf}[1]{\mathfrak{#1}}
\newcommand{\llangle}{{\langle \langle}}
\newcommand{\rrangle}{{\rangle \rangle}}
\newcommand{\N}{\mathbb{N}}
\newcommand{\g}{\mathfrak{g}}
\title{Some applications of Gröbner-Shirshov bases to Lie algebras}
\author{Luis Mendonça}
\address{Instituto de Ciências Exatas, Universidade Federal de Minas Gerais (UFMG), 31270-901 Belo Horizonte, Brazil.}
\email{luismendonca@mat.ufmg.br}
\keywords{Lie algebras, Rips construction, hopfian, cohopfian, Gröbner-Shirshov bases}
 \subjclass[2010]{17B05, 17B65, 17A61, 17B55}
\begin{document}

 \begin{abstract}
 We show that if a countably generated Lie algebra $H$ does not contain isomorphic copies of certain finite-dimensional nilpotent Lie algebras $A$ and $B$ (satisfying some mild conditions), then $H$ embeds into a quotient of $A \ast B$ that is at the same time hopfian and cohopfian. This is a Lie algebraic version of an embedding theorem proved by C. Miller and P. Schupp for groups. We also prove that any finitely presentable Lie algebra is the quotient of a finitely presented, centerless, residually nilpotent and SQ-universal Lie algebra of cohomological dimension at most $2$ by an ideal that can be generated by two elements as a Lie subalgebra. This is reminiscent of the Rips construction in group theory. In both results we use the theory of Gröbner-Shirshov bases.
 \end{abstract}

\maketitle

\section{Introduction}
Throughout this note, we consider Lie algebras over an arbitrary field $K$.

A Lie algebra $L$ is said to be \emph{cohopfian} if it is not isomorphic to a proper subalgebra. Equivalently, any injective homomorphism $\varphi \colon L \to L$ is already an isomorphism. Dually, $L$ is \emph{hopfian} if any surjective homomorphism $\psi \colon L \to L$ is an isomorphism.

Finite-dimensional Lie algebras are evidently both hopfian and cohopfian. It is a classical fact that finitely generated free Lie algebras are hopfian, but clearly the non-abelian ones are not cohopfian. Any simple Lie algebra is obviously hopfian. Non-hopfian Lie algebras are rarer, but one can even find a finitely presented one such as
\[ \Gamma = \langle x,y,z,t \mid [x,[y,z]]=z, \ \ [x,t] = 0, \ \ [y,t]=0, \ \ [y,z] = [z,t] \rangle;\]
see \cite[page~117]{BokutKukinBook}. Further information and examples of (non-)(co)hopfian algebras may be found in \cite{RowenSmall2017, PassmanSmall2021}.

We denote by $F(L)$ the Frattini subalgebra of a Lie algebra $L$, that is, its subalgebra of non-generator elements. Moreover, $A \ast B$ denotes the free product of two Lie algebras $A$ and $B$. We will prove the following embedding result.

\begin{thm}  \label{thm.embedding}
 Let $H$ be a countably generated Lie algebra and let $A$ and $B$ be two finite-dimensional Lie algebras that do not embed into $H$. Suppose further that $\dim A = \dim B \geq 4$, that $F(A) \neq 0$ and that $\dim F(B) \geq 2$. Then
 \begin{enumerate}
  \item\label{part1thm1} $H$ can be embedded in a cohopfian quotient $L$ of $A \ast B$,
  \item\label{part2thm1} if $A$ and $B$ are nilpotent but not isomorphic to each other, then $H$ can be embedded in a quotient $L$ of $A \ast B$ that is both hopfian and cohopfian.
 \end{enumerate}
\end{thm}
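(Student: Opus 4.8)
The plan is to realize $L$ as an explicit quotient $(A\ast B)/I$ and to exploit a Gröbner-Shirshov basis for $I$ to gain total control of the linear structure of $L$. First I would fix bases $a_1,\dots,a_n$ of $A$ and $b_1,\dots,b_n$ of $B$ (with $n=\dim A=\dim B\ge 4$), so that $A\ast B$ is presented on the generators $a_i,b_j$ by the multiplication tables of $A$ and of $B$. Choosing a generator $u$ of $A$ and a generator $v$ of $B$, the normal form theorem for free products identifies the subalgebra $\langle u,v\rangle$ with the free product $Ku\ast Kv$, that is, with a free Lie algebra of rank two sitting inside $A\ast B$; the left-normed elements $h_i:=(\ad v)^{\,i-1}(u)$ are then linearly independent. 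Writing $H=\langle h_1,h_2,\dots\mid \rho_1,\rho_2,\dots\rangle$, I let $I$ be the ideal generated by the Lie polynomials $\rho_k$, expressed through $h_i\mapsto (\ad v)^{\,i-1}(u)$, together with finitely many extra relations to be fixed in the rigidity step below.

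The first technical point is the embedding. I would fix a monomial order on associative Lyndon-Shirshov words and run the Shirshov completion on these defining relations to produce a Gröbner-Shirshov basis whose reduced words furnish an explicit $K$-basis of $L$. From that basis one reads off simultaneously that $I\cap A=I\cap B=0$, so the structure maps give embeddings $A\cong\bar A\hookrightarrow L$ and $B\cong\bar B\hookrightarrow L$, and that $h_i\mapsto\overline{(\ad v)^{\,i-1}(u)}$ extends to an embedding $H\hookrightarrow L$: this is the Lie-algebraic analogue of the Higman-Neumann-Neumann embedding, and the completion is arranged precisely so that no composition forces an unwanted relation inside a free factor or among the images of the $h_i$. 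Since $L$ is generated by the finite set $\bar A\cup\bar B$, it is in particular finitely generated and $L=\langle\bar A,\bar B\rangle$.

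The heart of the argument, and the step I expect to be the main obstacle, is \emph{rigidity}: showing that $\bar A$ and $\bar B$ are the only subalgebras of $L$ isomorphic to $A$ and to $B$ respectively (when $A\cong B$ one allows the evident interchange). All remaining hypotheses enter here. From the normal forms I would first locate every finite-dimensional subalgebra of $L$ relative to the images of $A$, $B$ and $H$; the hypothesis that $A$ and $B$ do \emph{not} embed into $H$ then guarantees that the embedded copy of $H$ hides no copy of $A$ or $B$, eliminating the stray candidates coming from the $H$-part. The Frattini conditions $F(A)\neq 0$ and $\dim F(B)\ge 2$ are what pin the remaining copies down exactly: they force any subalgebra isomorphic to $A$ or $B$ to carry a distinguished subalgebra of non-generators which, by the normal-form analysis, can only align with the corresponding free factor, so that $\bar A$ and $\bar B$ are isolated as asserted. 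Verifying this classification of finite-dimensional subalgebras from the Gröbner-Shirshov data is the delicate part; it is here that the finitely many rigidifying relations are chosen (and re-checked by a completion argument) to destroy any surviving copies while leaving $\bar A$, $\bar B$ and $H$ intact.

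Granting rigidity, both conclusions follow quickly. For part~\ref{part1thm1}, let $\varphi\colon L\to L$ be injective. Then $\varphi(\bar A)$ is a finite-dimensional subalgebra isomorphic to $A$, hence equals $\bar A$ or $\bar B$, and likewise $\varphi(\bar B)\in\{\bar A,\bar B\}$; in either case $\{\varphi(\bar A),\varphi(\bar B)\}=\{\bar A,\bar B\}$ (this is where $\dim A=\dim B$ makes a possible interchange consistent), so $\img\varphi\supseteq\langle\bar A,\bar B\rangle=L$ and $\varphi$ is an isomorphism; thus $L$ is cohopfian. For part~\ref{part2thm1}, using that $A$ and $B$ are nilpotent I would choose the rigidifying relations so that in addition $L$ is residually nilpotent. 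Since $L$ is finitely generated, every quotient by a term of its lower central series is a finite-dimensional nilpotent Lie algebra; a surjective endomorphism $\psi\colon L\to L$ preserves each such term and hence induces a surjective, therefore bijective, endomorphism of each of these finite-dimensional quotients, which forces $\ker\psi=0$. So $L$ is hopfian, and it remains cohopfian because the non-isomorphism of $A$ and $B$ preserves the rigidity of $\bar A$ and $\bar B$ in this modified construction.
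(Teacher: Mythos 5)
Your high-level architecture matches the paper's: present $L$ by a Gr\"obner--Shirshov basis, embed $H$ into the subalgebra generated by one element of $A$ and one of $B$, classify the $n$-dimensional subalgebras of $L$ (this is where the Frattini and non-embeddability hypotheses enter), and deduce cohopficity because an injective endomorphism must permute $\{A,B\}$ and these generate $L$. But the two load-bearing steps are not actually carried out. First, your embedding words are poorly chosen: the leading words of $(\ad v)^{i-1}(u)$ are $v^{i-1}u$, and $v^{j}u$ is a subword of $v^{i}u$ for $j<i$, so the relators obtained by substituting these into the multiplication table of $H$ are not even a reduced set, let alone overlap-free; the ``Shirshov completion'' you invoke is exactly the thing that must be controlled, and nothing in your argument shows it terminates or preserves the copies of $A$, $B$ and $H$. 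The paper avoids this entirely by keeping the $h_i$ as separate generators and using the relators $a_n^2b_1^i a_n b_1 - h_i$, whose leading words $a_n^2b_1^ia_nb_1$ have no overlap or containment among themselves or with the multiplication tables, so the presentation is a GS-basis on sight. Second, the rigidity step --- the heart of the proof --- is deferred to unspecified ``rigidifying relations,'' and your account of how $F(A)\neq 0$ and $\dim F(B)\geq 2$ are used is not the correct mechanism: in the paper one shows from the $G$-irreducible normal forms that an $n$-dimensional subalgebra $\mathfrak g$ must contain elements $g_i=b_i+(\text{lower terms in the }b_j)$ for $2\leq i\leq n-1$, and the non-generator hypothesis on $b_1,b_n$ is then used to conclude that these $n-2$ elements already generate all of $B$, forcing $\mathfrak g=B$ by dimension count.

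For part~\ref{part2thm1} your argument is not just incomplete but unworkable. You propose to choose relations so that $L$ is residually nilpotent and then run the Mal'cev-type argument (surjections of finite-dimensional lower central quotients are bijections). But $H$ is an arbitrary countably generated Lie algebra and it embeds in $L$; subalgebras of residually nilpotent Lie algebras are residually nilpotent, so $L$ cannot be made residually nilpotent unless $H$ already is. Indeed the paper's construction goes in the opposite direction: it replaces $H$ by $\widetilde H=(S\oplus K)\ast H$ precisely to force relations $[h_1,h_2]=h_1=[h_1,h_3]$, which put $h_1$ in $\bigcap_m L^m$. Hopficity is instead proved by showing that a surjection $\psi$ restricts to automorphisms of $A$ and $B$ (here nilpotency of $A$ and $B$, not of $L$, is what is used: $A$ and $B$ survive in the nilpotent quotients $L/\llangle B\rrangle$ and $L/\llangle A\rrangle$, so they meet $L^m$ trivially and $\psi|_A$, $\psi|_B$ are injective), then pinning down $\psi(a_n)=\lambda a_n$ and $\psi(b_1)=\mu b_1$ by a leading-word computation, deducing $\psi(h_j)=\lambda^3\mu^{j+1}h_j$, using the planted relations to force $\lambda^3=\mu=1$, and finally exhibiting an explicit inverse. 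You would need to replace your residual-nilpotence strategy with an argument of this kind.
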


The motivation for this investigation is a theorem of C. Miller and P. Schupp in group theory. In \cite{MillerSchupp1971}, the authors show that every countable group $G$ is embeddable in a hopfian quotient $Q$ of the free product $C_p \ast C_s$ of cyclic groups, provided that $p\geq 2$ and $s \geq 3$. Furthermore, if $G$ does not contain elements of order $p$ or $s$, then $Q$ can be chosen to be cohopfian as well.

The conditions that we require on $A$ and $B$ are not strong obstacles to the applicability of part~\ref*{part1thm1} of the result. Indeed, it is enough to replace the finite-dimensional Lie algebra that is not embeddable in $H$ with a direct sum of it with a suitable nilpotent Lie algebra. We believe however that the conditions could be relaxed at the cost of proving some technical lemmas similar to Lemma~\ref{L1}.

The tool used in \cite{MillerSchupp1971} is the small cancellation theory of presentations of groups. Part of the usefulness of such theory lies in the fact that, if in a group presentation the defining relators have small overlap compared to their lengths, then one can determine whether a given word in the generators represents the trivial element. In this paper we make we use of the concept of Gröbner-Shirshov bases. Such theory was proposed by Shirshov \cite{Shirshov2} and developed further by Bokut and other authors (see e.g. \cite{BokutKukinBook}). It allows us to define Lie algebras by generators and relators in a way that we can again decide whether a given element is trivial or not. We review in Section~\ref{prelim.GS} the basic facts of the theory.

Another important and influential result in group theory is a theorem of E. Rips \cite{Rips1982}. There it is proved that any finitely presented group $Q$ is a quotient of a certain small cancellation group $G$ by a normal subgroup $N$ that can be generated by $2$ elements (as a subgroup). Such construction is quite useful to build examples of subgroups of small cancellation groups having some unexpected properties, especially from the point of view of decision problems.

More recently, new versions of such construction, where $G$ is required to satisfy some extra conditions,  were studied by other authors. For instance, a version where $G$ is residually finite but $N$ requires $3$ generators was obtained by D. Wise \cite{Wise2003}.

Using Gröbner-Shirshov bases again, we prove the following result.

\begin{thm} \label{RipsVersion}
 Let $Q$ be a finitely presented Lie algebra. Then there exists an exact sequence
\[0 \to N \to L \to Q \to 0\]
where $L$ is finitely presented, centerless, of cohomological dimension at most $2$, residually nilpotent and SQ-universal, and $N$ can be generated (as a Lie algebra) by two elements.
\end{thm}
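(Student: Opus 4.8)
The plan is to imitate Rips's construction at the level of presentations and to control the resulting Lie algebra through a Gröbner–Shirshov basis. Fix a finite presentation $Q=\langle x_1,\dots,x_n\mid r_1,\dots,r_m\rangle$ and define $L$ by adjoining two generators $a,b$ together with relations
\[[x_i,a]=p_i,\qquad [x_i,b]=q_i,\qquad r_j=w_j,\]
where the $p_i,q_i,w_j$ are carefully chosen Lie words in $a$ and $b$ alone. Writing $N$ for the subalgebra generated by $a$ and $b$, the first two families of relations say that $\operatorname{ad} x_i$ maps $a$ and $b$ into $N$; since $\operatorname{ad} x_i$ is a derivation and $N$ is a subalgebra, this forces $[L,N]\subseteq N$, so that $N$ is an ideal generated by the two elements $a,b$. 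The third family makes every $r_j$ lie in $N$, so modulo $N$ the presentation collapses to that of $Q$; hence $L/N\cong Q$ and we obtain the exact sequence $0\to N\to L\to Q\to 0$ with $N$ two-generated. The whole construction thus reduces to a good choice of the words $p_i,q_i,w_j$, which play the role of the small-cancellation relators in \cite{Rips1982,MillerSchupp1971}.

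Next I would fix a monomial ordering on Lie monomials and choose the $p_i,q_i,w_j$ so that the leading monomials of all defining relations have no nontrivial self- or mutual overlaps — the Gröbner–Shirshov analogue of a metric small-cancellation hypothesis. The main work, and the step I expect to be the principal obstacle, is to verify that with such a choice every composition of the relations is trivial, so that the defining relations already form a Gröbner–Shirshov basis. Once this is established, the machinery recalled in Section~\ref{prelim.GS} yields an explicit linear basis of $L$ in terms of normal (non-leading) monomials. From these normal forms one reads off at once that no relator is a pure word in $a,b$, so that $N$ is in fact the free Lie algebra on $a$ and $b$, and that the map $L\to Q$ has kernel exactly $N$.

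The remaining properties are then extracted from the normal-form description. Because the leading monomials have no overlaps, the associated Anick/Shirshov chains terminate in length two, giving a free resolution $0\to U(L)\otimes V_2\to U(L)\otimes V_1\to U(L)\to K\to 0$ of the trivial module, where $V_1$ and $V_2$ are spanned by the generators and the relations respectively; hence $\cd L\le 2$. Assigning positive integer weights to $x_1,\dots,x_n,a,b$ for which all defining relations are homogeneous makes $L$ a positively graded Lie algebra, and a positively graded Lie algebra satisfies $\gamma_n(L)\subseteq\bigoplus_{d\ge n}L_d$, whence $\bigcap_n\gamma_n(L)=0$ and $L$ is residually nilpotent. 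For the centre, a short computation with normal forms shows that no nonzero element can simultaneously commute with $a$ and $b$ — this uses the freeness of $N$ and the faithfulness of the adjoint action of the $x_i$ — so $L$ is centerless. Finally, SQ-universality follows by the same free-product-quotient argument underlying Theorem~\ref{thm.embedding}: every countable Lie algebra embeds into a suitable quotient of $L$, the non-abelian free subalgebra $N$ providing the required room, exactly as the free group $F_2$ is SQ-universal in the classical setting.

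In summary, the construction itself is routine, and all of the stated properties are formal consequences of a single combinatorial input: that the relators can be chosen with small overlap so that the defining relations constitute a Gröbner–Shirshov basis. Producing such relators, and checking the triviality of all compositions, is where the real effort lies; the homogeneity needed for residual nilpotence must be arranged compatibly with this choice, which is the only delicate point beyond the small-overlap verification.
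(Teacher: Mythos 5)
Your overall architecture coincides with the paper's: adjoin two generators $a,b$, impose relations equating $[x_i,a]$, $[x_i,b]$ and the $r_j$ to words in $a,b$ whose leading monomials have no overlaps (the paper uses $w_i=a^2b^iab$ from Example~\ref{words.wi}), conclude that the defining set is a Gröbner--Shirshov basis, read off $\cd L\le 2$ from Anick's resolution, and observe that $N=\langle a,b\rangle$ is an ideal equal to the kernel of $L\to Q$. That part is sound. However, two of the five required properties are not actually proved by your argument.

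First, residual nilpotence: your plan is to assign positive weights making all defining relations homogeneous, but no such weighting exists for a general $Q$. The relators $r_j$ are arbitrary Lie polynomials in the $x_i$ and need not be homogenizable (e.g.\ a relator of the form $[x,[y,z]]-z$, as in the non-hopfian algebra $\Gamma$ of the introduction, forces the contradiction $\operatorname{wt}(x)+\operatorname{wt}(y)=0$); moreover the two families $[x_i,a]=p_i$ and $[x_i,b]=q_i$ already impose incompatible weight equations once the $p_i,q_i$ are chosen with distinct lengths, as the no-overlap condition essentially requires. The paper replaces the grading argument by a quantitative combinatorial claim: rewriting a representative of an element of $L^n$ into $G$-irreducible normal form shortens each word by at most a bounded amount $C$ per reduction step while each step introduces a letter $x_i$, so $n\le (C+1)\cdot\len(\bar f)$ for any nonzero normal form $f$; this is the actual content of residual nilpotence here and is absent from your proposal. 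Second, SQ-universality: you assert it follows because $N$ ``provides the required room,'' but containing a non-abelian free subalgebra does not imply SQ-universality (and your claim that $N$ is free on $a,b$ is itself unjustified, since pure words in $a,b$ may contain the leading words $a^2b^kab$ and hence be reducible). The paper proves SQ-universality by an explicit construction: given any countably generated $B=\langle e_1,e_2,\dots\mid S\rangle$ with $S$ a GS-basis, it forms the quotient of $L$ obtained by setting $a^2b^iab=0$ for $i\le q+m$ and $a^2b^{q+m+i}ab=e_i$, and checks that the resulting relator set is again a GS-basis, so $B$ embeds in that quotient. Both of these steps need to be supplied; as written, your proof establishes the exact sequence, the two-generation of $N$, and $\cd L\le 2$, but not residual nilpotence or SQ-universality (and the centerless claim leans on the unproved freeness of $N$, whereas the paper argues directly that the centralizer of $x_1$ is $K\cdot x_1$ via leading words).
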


Saying that $L$ is SQ-universal means that any countably generated Lie algebra embeds in a quotient of $L$. In the above, $L$ is defined by a finite Gröbner-Shirshov basis without overlap ambiguity (see Section~\ref{prelim.GS}). So Theorem~\ref{RipsVersion} can be interpreted as a result on badness of subalgebras of such Lie algebras, as much as the Rips construction is about badness of subgroups of small cancellation groups.

By choosing $Q$ to be a finitely presentable Lie algebra with unsolvable word problem, the Lie algebra $L$ determined in the theorem has unsolvable \emph{generalized word problem:} one cannot decide whether a given element of $L$, written in terms of its generators, is an element of the finitely generated subalgebra $N$.

Another possible application of Theorem~\ref{RipsVersion} is available when we choose $Q$ to be perfect (i.e. $Q = [Q,Q]$) and satisfying $H_2(Q,K)=0$, as was done by M. Bridson and A. Reid \cite{BridsonReid2014} for groups. Then it happens that the map $N \to L$ induces isomorphisms on the lower central quotients.

\begin{coro}  \label{coro.paraequiv}
 There exist finitely generated, residually nilpotent Lie algebras $A$ and $B$ and a homomorphism $A \to B$ that induces  isomorphisms $A/A^n \simeq B/B^n$ for all $n$ where $B$ is finitely presented, but $A$ is not.
\end{coro}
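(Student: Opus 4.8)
The plan is to apply Theorem~\ref{RipsVersion} to a carefully chosen finitely presented Lie algebra $Q$ and to set $B = L$ and $A = N$ in the resulting exact sequence $0 \to N \to L \to Q \to 0$, taking the homomorphism $A \to B$ to be the inclusion $N \hookrightarrow L$. By the theorem, $B = L$ is finitely presented and residually nilpotent, and $A = N$ is finitely generated (by two elements) and residually nilpotent, the latter because any subalgebra of a residually nilpotent Lie algebra is again residually nilpotent. It then remains to choose $Q$ so that (i) the inclusion induces isomorphisms $N/N^n \to L/L^n$ on all lower central quotients and (ii) $N$ fails to be finitely presented.

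For (i), I would require $Q$ to be perfect, $Q = [Q,Q]$, and superperfect, $H_2(Q,K) = 0$. Perfectness yields $Q^n = Q$, hence $L = N + L^n$ for every $n$, so that $N \twoheadrightarrow L/L^n$ with kernel $N \cap L^n$ and $N/(N \cap L^n) \cong L/L^n$; the whole content is therefore the identity $N \cap L^n = N^n$, which says exactly that the inclusion is an isomorphism on lower central quotients. I would deduce this from the Lie-algebra analogue of the Stallings--Stammbach theorem: a homomorphism inducing an isomorphism on $H_1$ and an epimorphism on $H_2$ induces isomorphisms on all lower central quotients. The five-term exact sequence
\[ H_2(L,K) \to H_2(Q,K) \to N/[L,N] \to L/[L,L] \to Q/[Q,Q] \to 0 \]
collapses, under the two hypotheses on $Q$, to an isomorphism $N/[L,N] \cong H_1(L)$, while the Lyndon--Hochschild--Serre spectral sequence of the extension supplies the comparison in degree $2$.

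The main obstacle is precisely the verification of these two homological hypotheses for the inclusion. A priori the natural map $H_1(N) = N/[N,N] \to H_1(L)$ is only surjective, with kernel $[L,N]/[N,N]$, so injectivity amounts to the equality $[L,N] = [N,N]$; moreover the $Q$-action on $H_1(N)$ and $H_2(N)$ enters the spectral-sequence terms $H_1(Q,H_1(N))$ and $H_2(N)_Q$ that govern the comparison on $H_2$. Controlling these requires exploiting the specific shape of $L$ produced by Theorem~\ref{RipsVersion}—that $L$ is defined by a finite Gröbner--Shirshov basis without overlap ambiguity and that the two generators of $N$ occupy prescribed positions—to force $[L,N] = [N,N]$ and the vanishing of the relevant spectral-sequence terms. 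This is the step where the superperfect hypothesis on $Q$ does its essential work, and I expect it to be the crux of the argument.

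For (ii), I would additionally require $Q$ to be finitely presented but not of type $\FP_3$, exhibiting first a perfect, superperfect example with this property. Then non-finite-presentability of $N$ follows from finiteness-property transfer in the extension: since $L$ is finitely presented of cohomological dimension at most $2$, it is of type $\FP_\infty$, and the Lie-algebra analogue of Bieri's implication ``$L$ of type $\FP_3$ and $N$ of type $\FP_2$ force $Q$ of type $\FP_3$'' would contradict the choice of $Q$. Hence $N$ is not of type $\FP_2$, in particular not finitely presented, which together with (i) completes the proof.
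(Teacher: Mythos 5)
Your overall strategy coincides with the paper's: apply Theorem~\ref{RipsVersion} to a perfect, superperfect $Q$, set $A=N$, $B=L$, and use the Lie-algebra Stallings--Stammbach lemma together with the LHS spectral sequence to get the isomorphisms on lower central quotients. For this part you have correctly isolated the two conditions to check ($H_1$-isomorphism and $H_2$-epimorphism) and the terms $H_1(Q,H_1(N))$, $H_2(N)_Q$ that control the comparison, but you explicitly stop short of verifying them. This verification is not actually the hard part: since $H_1(Q)=0$, the linear parts of the relators $r_i$ span the space of the $x_j$, so $H_1(L)$ is spanned by the images of $a$ and $b$; the relators $a^2b^iab-ax_i$ and $a^2b^{d+i}ab-bx_i$ show that $[x_i,a]$ and $[x_i,b]$ lie in $[N,N]$ (the leading words $(w_i)$ are brackets in $a$ and $b$ alone), which gives both the injectivity of $H_1(N)\to H_1(L)$ and the triviality of the $Q$-action on $H_1(N)$; the rest is the spectral-sequence bookkeeping you describe. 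So part (i) is essentially the paper's Proposition~\ref{induces.iso} with the final check deferred.

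For part (ii) you take a genuinely different route, and this is where the real gap sits. The paper only asks that $Q$ have finite cohomological dimension $\geq 2$ (e.g.\ $Q=\mathfrak{sl}_2(K)$ in characteristic $\neq 2$) and argues that if $N$ were finitely presented, the Lie-algebra version of Feldman's theorem would force $\cd L=\cd N+\cd Q>2$, contradicting $\cd L\leq 2$. Your route instead needs $Q$ perfect, superperfect, finitely presented and \emph{not} of type $\FP_3$, i.e.\ with $H_3(Q,K)$ infinite-dimensional; no finite-dimensional $Q$ can serve, and you do not exhibit such an algebra. Producing one is a nontrivial construction (the paper builds exactly such a $Q$ in Section~\ref{Applications}, via an amalgamated product over a subalgebra with infinite-dimensional $H_2$ followed by a universal central extension, but only for the stronger statement that $H_2(A)$ can be made infinite-dimensional). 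The finiteness-property transfer you invoke ($L$ of type $\FP_3$ and $N$ of type $\FP_2$ imply $Q$ of type $\FP_3$) is sound over a field, so your argument would go through once the example is supplied; but as written the existence of the required $Q$ is the missing step, and the paper's cohomological-dimension argument is both simpler and avoids it entirely.
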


We construct examples where $A$ can be chosen to have infinite-dimensional second homology group $H_2(A,K)$.

We observe that S.O. Ivanov, R. Mikhailov and A. Zaikovskii recently produced countably generated \emph{parafree} Lie algebras $L$ with $H_2(L,K) \neq 0$ \cite{IMZ2020}, where $K  $ is a field of characteristic $2$. Being parafree means that $L$ is residually nilpotent and there is a homomorphism $F \to L$, where $F$ is a free Lie algebra, that induces isomorphisms $F/F^n \to L/L^n$ for all $n$. So the setting is similar to the above: the authors produce Lie algebras that have indistinguishable nilpotent quotients, but differ on some other aspect.

\begin{notation*}
 For a subset $X$ of a Lie algebra $L$, we will denote by $\langle X \rangle$ and by $\llangle X \rrangle$ the subalgebra and the ideal, respectively, generated by $X$ in $L$.
\end{notation*}

\section{Gröbner-Shirshov bases}  \label{prelim.GS}
We recall here the basic facts of the theory of Gröbner-Shirshov bases for Lie algebras. For more details, we refer the reader to \cite{BokutChenMo2010, BokutKukinBook, Shirshov1, Shirshov2}.

Let $F$ be the free Lie algebra with a well-ordered free basis $X = \{x_1, x_2, \ldots\}$. For the elements of the free monoid $X^{\ast}$, we consider a lexicographic ordering such that $x_1 <_{lex} x_2 <_{lex} \cdots$ and $v <_{lex} u$ if $u$ is a proper prefix of $v$. A word $w\in X^{\ast}$ is said to be \textit{regular} if $w=uv$ implies $uv>_{lex} vu$ for  non-trivial subwords $u,v$, that is, $w$ is lexicographically greater than all of its cyclic permutations.

Let $[w]$ denote some bracketing of the word $w$, that is, $[w]$ is a non-associative word. We say that $[w]$ is \textit{non-associative regular} if $w$ is regular and:
\begin{enumerate}
 \item If $[w]=[u][v]$, then both $[u]$ and $[v]$ are non-associative regular, and
 \item If $[w]=[ [u_1][u_2] ][v]$, then $u_2 \leq_{lex} v$.
\end{enumerate}
Any regular word $w$ admits a unique bracketing $(w)$ that is non-associative regular, and the set of all non-associative regular words on some alphabet $X$ is a basis of the free Lie algebra generated by $X$ \cite{Shirshov1}. An inductive way to find the regular bracketing of a regular word goes as follows: if $w=uv$, where $v$ is the longest proper regular suffix of $w$, then $(w) = [ (u), (v) ]$. For instance if $a>b>c$ are elements of $X$, then the regular bracketing of $w = a^2cb$ is $(w) = [ a, [[a,c],b]]$, because $acb$, $b$ and $c$ are the longest proper regular suffixes of $w$, $acb$ and $ac$, respectively.

\begin{example}  \label{words.wi}
If $a, b \in X$ and $a>b$, then the words $w_i = a^2b^i a b$ are regular, and their regular bracketing is
\[ (w_i) = [ [a, [a, \underbrace{ b, \ldots, b}_{i \text{ times}}]] ,   [a,b] ].\]
These words were used in \cite{BokutChenMo2010} as a tool in the proof of some embedding theorems, and we will use them here too.
\end{example}

In view of the above, we may write elements of $F$ as linear combinations of regular words without bracketings, and it should be understood that they are endowed with the regular bracketing. For instance, $f = a^2 c b - b d$ is the element $[ a, [[a,c],b]]-[b,d]$ when $a>b>c>d$.

We will also need to consider the deg-lex ordering of words. Let us write $\len(w)$ for the length of a word. We put $u< v$ for $u,v \in X^{\ast}$ if either $\len(u) <\len(v)$, or if the lengths are the same but $u <_{lex} v$. This translates into an ordering of the basis of $F$: by definition $(u)< (v) \Leftrightarrow u < v$.

Any element $f \in F$ is uniquely written as a linear combination of regular non-associative words. The leading word of $f$, denoted by $\bar{f}$, is the highest (with respect to the deg-lex ordering) corresponding regular associative word appearing with non-zero coefficient in the expression for $f$. We say that $f$ is \textit{monic} if the coefficient of the term associated with $\bar{f}$ in $f$ is $1 \in K$.

Let $f, g \in F$. The words $\bar{f}$ and $\bar{g}$ are associative regular, and so is $w = \bar{f} \bar{g}$ if $\bar{f}>_{lex} \bar{g}$, but it may be the case that the regular bracketing of $w$ is not simply $[(\bar{f}), (\bar{g})]$. On the other hand, the following holds.

\begin{lemma} \label{LT.product}
 Let $f,g \in F$ with $\bar{f} >_{lex} \bar{g}$. Then the leading word of $[f,g]$ is $\bar{f}\bar{g}$.
\end{lemma}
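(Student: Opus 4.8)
The plan is to expand both arguments in the basis of regular non-associative words and reduce everything to the single-word case, then run a lexicographic domination argument. Write $f = \sum_i \alpha_i (u_i)$ and $g = \sum_j \beta_j (v_j)$ with the $u_i, v_j$ regular, ordered so that $\bar f = u_1$ and $\bar g = v_1$ are the deg-lex leading words; in particular $u_1$ has maximal length among the $u_i$ and is lex-largest among those of that length, and likewise for $v_1$. Since the free Lie algebra is graded by word length, each summand $[(u_i),(v_j)]$ of $[f,g] = \sum_{i,j}\alpha_i\beta_j[(u_i),(v_j)]$ is homogeneous: every regular word occurring in it has length $\len(u_i)+\len(v_j) \le \len(u_1)+\len(v_1) =: L$. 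Hence the length-$L$ component of $[f,g]$ receives contributions only from indices with $\len(u_i)=\len(u_1)$ and $\len(v_j)=\len(v_1)$, and the whole argument comes down to identifying the lex-largest length-$L$ word among these and checking it does not cancel.

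The key input is the single-word statement: for regular words $u >_{lex} v$ the word $uv$ is regular and $\overline{[(u),(v)]} = uv$ with coefficient $1$, all other terms being strictly lex-smaller words of the same length $\len(u)+\len(v)$. This is Shirshov's lemma and may be cited from the references; if one reproves it, it goes by induction using the inductive description of the regular bracketing. Writing $(u)=[(u'),(u'')]$ with $u''$ the longest proper regular suffix of $u$, the bracket $[(u),(v)]$ is already in canonical non-associative regular form when $u'' \le_{lex} v$ (condition (2)), whereas if $u'' >_{lex} v$ one applies the Jacobi identity $[(u),(v)] = [[(u'),(v)],(u'')] + [(u'),[(u''),(v)]]$ and feeds the smaller brackets into the induction, the second term producing the leading word $u'(u''v)=uv$ and the first being lex-smaller. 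Making this induction go through (the outer bracket is not shorter in length and requires the classical refinement) is the delicate part of that lemma, which is why I would lean on the cited statement.

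Granting the single-word lemma, each nonzero summand with $u_i \neq v_j$ has leading word $w_{ij}$ equal to whichever of $u_iv_j$, $v_ju_i$ is regular, and all its length-$L$ words are $\le_{lex} w_{ij}$. It then remains to show that $u_1v_1 = \bar f\,\bar g$ is the strict lex maximum of the $w_{ij}$ over the relevant indices, so that it occurs only for $(i,j)=(1,1)$ and survives with coefficient $\alpha_1\beta_1 \neq 0$. For a concatenation of the form $u_iv_j$ this is immediate: from $u_i \le_{lex} u_1$ and $v_j \le_{lex} v_1$ the comparison with $u_1v_1$ first diverges inside the $u$-block and favours $u_1v_1$. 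For $v_ju_i$ one uses $u_1 >_{lex} v_1 \ge_{lex} v_j$ together with the convention that $v <_{lex} u$ when $u$ is a proper prefix of $v$: the relation $u_1 >_{lex} v_j$ forbids $v_j$ from being a proper prefix of $u_1$, which forces the first disagreement between $v_ju_i$ and $u_1v_1$ to fall inside the leading block and again to favour $u_1v_1$.

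The step I expect to require the most care is ruling out a hidden cancellation, namely that no index $(i,j)\neq(1,1)$ satisfies $w_{ij}=u_1v_1$; here I would exploit that a regular word is strictly lex-greater than each of its proper suffixes. One easily sees $u_iv_j = u_1v_1$ forces $(i,j)=(1,1)$, so the danger is an equality $v_ju_i = u_1v_1$, which I would dispatch by comparing $\len \bar f$ and $\len \bar g$. If $\len \bar g > \len \bar f$ then $v_j = \bar f\,t$ and $\bar g = t\,u_i$ with $t$ nonempty, and regularity of $v_j$ gives $\bar f\,t >_{lex} t\,\bar f$; combined with $v_j \le_{lex} \bar g$ this yields $t\,\bar f <_{lex} t\,u_i$, hence $\bar f <_{lex} u_i$, contradicting $u_i \le_{lex} \bar f$. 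The cases $\len \bar g < \len \bar f$ and $\len \bar g = \len \bar f$ contradict the prefix convention and $\bar f >_{lex} \bar g$ respectively. Once cancellation is excluded, the length-$L$ component of $[f,g]$ has lex-maximal word $u_1v_1$ with coefficient $\alpha_1\beta_1 \neq 0$, so $\overline{[f,g]} = \bar f\,\bar g$. This last length-by-length overlap analysis is routine but genuinely fiddly, and it is the part I would write out in full.
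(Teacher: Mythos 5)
The paper does not actually prove this lemma: its ``proof'' is a citation to \cite[Corollary~2.11.14]{BokutKukinBook}. So any self-contained argument is automatically a different route, and yours is essentially the standard one that presumably underlies the cited result: isolate the top-length homogeneous component, invoke the single-word statement $\overline{[(u),(v)]}=uv$ for regular $u>_{lex}v$ (which you correctly identify as the citable core, Shirshov's special-bracketing lemma), and then run the deg-lex domination and no-cancellation analysis. Your cancellation analysis is correct, including the use of regularity of $v_j=\bar{f}t$ to get $\bar{f}t>_{lex}t\bar{f}$. What your version buys over the paper's is an actual proof modulo only the one-word case; what it costs is exactly the fiddly bookkeeping you predicted, and one subcase of that bookkeeping is not covered by the justification you give. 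In the $v_ju_i$ branch of the domination step you rule out $v_j$ being a proper prefix of $u_1$, but not $u_1$ being a proper prefix of $v_j$ (this genuinely occurs, e.g.\ $u_1=ab$, $v_j=abc$ with $a>b>c$); in that subcase the first disagreement between $v_ju_i$ and $u_1v_1$ falls outside $u_1$, so your ``first disagreement inside the leading block'' argument does not apply. The conclusion still holds and is salvaged by the same suffix property you invoke for the cancellation step: writing $v_j=u_1t$ with $t$ nonempty, regularity of $v_j$ gives $v_j>_{lex}t$, hence $v_1\geq_{lex}v_j>_{lex}t$, and since $t$ cannot then be a prefix of $v_1$ the first difference between $v_1$ and $tu_i$ occurs within $t$ and favours $v_1$, giving $u_1v_1>_{lex}u_1tu_i=v_ju_i$. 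With that patch written in, your argument is complete.
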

\begin{proof}
 See \cite[Corollary~2.11.14]{BokutKukinBook}.
 \end{proof}

In order to define the concept of Gröbner-Shirshov bases we need to consider two types of composition of elements. Suppose that $f,g \in F$ are monic elements with $\bar{f}= ab >_{lex} bc = \bar{g}$ for some non-trivial words $a$, $b$ and $c$. In the language of \cite{Bergman1978}, the words $\bar{f}$ and $\bar{g}$ have \emph{overlap ambiguity}. By Shirshov's Lemma \cite[Lemma~2.11.15]{BokutKukinBook}, there are two special bracketings $[u]_1$ and $[u]_2$ for the regular word $u=abc$ such that $[u]_1$ extends the regular bracketing $(\bar{f})$ of $\bar{f}=ab$ and $[u]_2$ extends the regular bracketing $(\bar{g})$ of $\bar{g}=bc$, and $\overline{ [u]}_1  = \overline{[u]}_2=u$.

Let $u_1$ and $u_2$ be the elements of $F$ obtained from $[u]_1$ and $[u]_2$ by substituting $(\bar{f})$ and $(\bar{g})$ with $f$ and $g$, respectively. The \textit{first-order composition} $(f,g)^I_u$ of $f$ and $g$ with respect to $u$ defined as
\[(f,g)^I_u = u_1 - u_2.\]
The important fact here is that the highest terms of $u_1$ and $u_2$ cancel out in the computation of $(f,g)^I_u$.

Similarly, suppose that $\bar{g}$ is a subword of $\bar{f}$ for some monic elements $f,g \in F$. Again by \cite[Lemma~2.11.15]{BokutKukinBook} we can find a bracketing $[u]_0$ of $u = \bar{f}$ that extends the regular bracketing $(\bar{g})$ of $\bar{g}$, and $\overline{[u]}_0 = u$. By substituting $(\bar{g})$ with $g$ in $[u]_0$ we obtain an element $f_{\ast}$ of $F$.  The \textit{second-order composition} $(f,g)^{II}_u$ of $f$ and $g$ is
\[(f,g)^{II}_u = f - f_{\ast}.\]

A subset $G \subset F$ is \textit{reduced} if all its elements are monic and if there are no distinct elements $f, g \in G$ such that $\bar{f}$ is a subword of $\bar{g}$. In other words, no second-order composition is possible between elements of $G$.

Finally, a \textit{Gröbner-Shirshov basis} (GS-basis, for short) of an ideal $I \subseteq F$ is a reduced set $G \subset F$ that generates $I$ as an ideal and such that if $f, g \in G$ define a first-order composition with respect to some word $u$, then
\[ (f,g)^I_u = \sum_{i=1}^m h_i,\]
where each $h_i$ lies in $\langle \langle r_i \rangle \rangle$ for some $r_i \in G$ and $\bar{h}_i < u$ for $1 \leq i \leq m$. In this case, we say that the overlap ambiguity of $f$ and $g$ is \emph{solvable}.

\begin{theorem}\cite[Lemma~3.2.7]{BokutKukinBook} \label{thmShirshov}
 Let $G$ be a Gröbner-Shirshov basis of the ideal $I \subseteq F$ and let $0 \neq f \in F $. If $f \in I$, then
 $\bar{f}$ contains $\bar{r}$ as a subword for some $r \in G$.
\end{theorem}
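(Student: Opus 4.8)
The plan is to run the minimal-counterexample argument that underlies the Composition-Diamond Lemma. The starting point is a structural description of the ideal $I = \ideal{G}$: since $I$ is the smallest subspace closed under bracketing, it is spanned by iterated brackets $[\dots[r,y_1],\dots,y_\ell]$ with $r \in G$ and $y_j \in F$, which by multilinearity we may take to be regular words. For each $r \in G$ and each regular word $w$ containing $\bar r$ as a subword in a position compatible with the regular bracketing, Shirshov's Lemma \cite[Lemma~2.11.15]{BokutKukinBook} provides a bracketing $[w]_r$ of $w$ extending $(\bar r)$ with $\overline{[w]_r} = w$; substituting $r$ for $(\bar r)$ yields an element of $I$ that is monic with leading word exactly $w$. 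First I would show that any $f \in I$ admits a \emph{normal-form representation}
\[ f = \sum_{i=1}^{k} \alpha_i\, [w_i]_{r_i}, \qquad \alpha_i \in K,\ r_i \in G, \]
where each $w_i$ is a regular word containing $\bar r_i$; this follows by rewriting each iterated bracket above, modulo terms of strictly smaller leading word, using Lemma~\ref{LT.product} and induction on length.

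Next I would order such representations by their \emph{height} $w := \max_i w_i$ in the deg-lex order and, with $f$ fixed, choose one of minimal height. Since $\overline{[w_i]_{r_i}} = w_i \leq w$ for every $i$, we always have $\bar f \leq w$. If $\bar f = w$, then some index $i$ with $w_i = w$ survives in the leading term, so $\bar f = w_i$ contains $\bar r_i$, which is precisely the desired conclusion. It therefore remains to derive a contradiction from $\bar f < w$. In that case the coefficient of $w$ in $f$ vanishes, so the monic leading terms of the summands with $w_i = w$ must cancel; writing $S = \{ i : w_i = w \}$ we get $\sum_{i \in S} \alpha_i = 0$ and $|S| \geq 2$. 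Using $\sum_{i \in S}\alpha_i = 0$, I would then reduce the study of the top stratum to pairwise differences, via $\sum_{i \in S} \alpha_i [w]_{r_i} = \sum_{i \in S} \alpha_i \bigl([w]_{r_i} - [w]_{r_1}\bigr)$.

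The core of the argument is a case analysis on how the subwords $\bar r_1$ and $\bar r_2$ sit inside $w$. Because both $[w]_{r_1}$ and $[w]_{r_2}$ are monic with leading word $w$, the difference $[w]_{r_1} - [w]_{r_2}$ already has leading word $< w$; the real content is to re-express this difference, and hence $f$, in normal form of height $< w$. If $\bar r_1$ and $\bar r_2$ are disjoint in $w$, the two insertions commute modulo lower terms and the difference can be brought to normal form of height $< w$ using only the Jacobi identity and Lemma~\ref{LT.product}, with no appeal to the GS hypothesis. A proper nesting $\bar r_2 \subsetneq \bar r_1$ with $r_1 \neq r_2$ is impossible because $G$ is reduced, and coincident insertions simply combine. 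The decisive case is a genuine overlap, say $\bar r_1 = ab >_{lex} bc = \bar r_2$ inside $w$: there $[w]_{r_1} - [w]_{r_2}$ is exactly the result of inserting into $w$ the first-order composition $(r_1,r_2)^I_u$ with $u = abc$, and the GS hypothesis asserts that this composition is solvable, i.e.\ a sum of elements of ideals $\ideal{r}$, $r \in G$, each with leading word $< u \leq w$. Bracketing this solvable expression by the surrounding context of $w$ yields a normal-form representation of height $< w$.

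In every case we thus obtain a representation of $f$ of strictly smaller height, contradicting minimality; hence $\bar f = w$ and $\bar f$ contains $\bar r_i$ for some $i$, proving the theorem. The main obstacle I expect is the bookkeeping in the overlap case: one must verify that the solvability of the \emph{local} composition $(r_1,r_2)^I_u$ is inherited by the \emph{global} expression obtained after bracketing it with the part of $w$ lying outside $u$, i.e.\ that bracketing an element of leading word $< u$ by a surrounding word produces something of leading word $< w$ admitting a normal form of height $< w$. This inheritance is where the deg-lex ordering and Lemma~\ref{LT.product} are used most heavily, and handling it carefully — together with the analogous but hypothesis-free verification in the disjoint case — is the technical heart of the proof.
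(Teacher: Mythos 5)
Your outline is correct and is essentially the standard Composition--Diamond argument: take a representation of $f$ by normal $r$-words of minimal height $w$, observe that $\bar f < w$ forces cancellation of the monic leading terms, and then use the case analysis on disjoint, nested and overlapping occurrences of the $\bar r_i$ in $w$ --- with reducedness killing the nested case and solvability of first-order compositions handling the overlap case --- to produce a representation of smaller height, a contradiction. The paper does not prove this statement itself but imports it from \cite[Lemma~3.2.7]{BokutKukinBook}, and your reconstruction matches the proof given there; the two points you flag as bookkeeping (that every element of $\ideal{r}$ is a combination of normal $r$-words with controlled leading terms, and that local solvability of $(r_1,r_2)^I_u$ persists after bracketing with the context of $w$) are precisely the auxiliary lemmas supplied in that reference.
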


For a GS-basis $G$, we say that a word $w$ is \emph{$G$-irreducible} if it does not contain any $\bar{r}$, with $r \in G$, as a subword. So the theorem says that $f \neq 0$ in $F/I$ if $\bar{f}$ is $G$-irreducible. Another formulation is the following.

\begin{corollary}  \cite[Lemma~3.2.8]{BokutKukinBook}.
 If $G \subset I \subseteq F$ is a Gröbner-Shirshov basis, then the set of the images of all the $G$-irreducible words forms a basis of $F/I$ as a vector space.
\end{corollary}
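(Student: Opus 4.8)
The plan is to prove the two halves of the basis claim separately: that the $G$-irreducible words span $F/I$, and that their images are linearly independent modulo $I$. Both parts rest on the fact that the deg-lex ordering is a well-ordering of $X^{\ast}$ (for each length there are only finitely many words, and lengths are non-negative integers), so that one may argue by induction on the leading word $\bar{f}$, combined with Theorem~\ref{thmShirshov} and the bracketing construction behind second-order compositions.

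For spanning, I would fix $0 \neq f \in F$ and induct on $\bar{f}$, writing the leading term of $f$ as $c\,(\bar{f})$ with $c \in K$. If $\bar{f}$ is $G$-irreducible, then $(\bar{f})$ is itself one of the claimed basis elements, and $f - c\,(\bar{f})$ has strictly smaller leading word; by the inductive hypothesis it is congruent modulo $I$ to a combination of $G$-irreducible words, hence so is $f$. If instead $\bar{f}$ is reducible, then $\bar{f}$ contains some $\bar{r}$ (with $r \in G$) as a subword, and by Shirshov's Lemma \cite[Lemma~2.11.15]{BokutKukinBook} there is a bracketing $[\bar{f}]_0$ extending the regular bracketing of $\bar{r}$ with $\overline{[\bar{f}]}_0 = \bar{f}$; substituting $r$ for $(\bar{r})$ produces a monic element $f_{\ast} \in \langle \langle r \rangle \rangle \subseteq I$ with $\overline{f_{\ast}} = \bar{f}$. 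Then $f - c\,f_{\ast} \equiv f \pmod{I}$ and has strictly smaller leading word, so the inductive hypothesis again applies. Termination is guaranteed by the well-foundedness of deg-lex.

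Linear independence is then immediate from Theorem~\ref{thmShirshov}. Suppose $\sum_{i} c_i\,(w_i) \in I$ for finitely many distinct $G$-irreducible words $w_i$ with all $c_i \neq 0$; I want to conclude that there are in fact no such terms. If there were, the element $f = \sum_i c_i\,(w_i)$ would be a nonzero member of $I$ whose leading word is the deg-lex maximum of the $w_i$ (these being distinct basis words, no cancellation occurs among the leading terms), and this maximum is itself one of the $w_i$, hence $G$-irreducible. But Theorem~\ref{thmShirshov} forces $\bar{f}$ to contain some $\bar{r}$ as a subword, contradicting $G$-irreducibility. Thus the images of the $G$-irreducible words are linearly independent in $F/I$.

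The delicate point is the reducible case of the spanning argument: one needs, for every word containing some $\bar{r}$ as a subword, an actual element of $I$ with that exact leading word, which is precisely what the second-order bracketing $[\bar{f}]_0$ supplies. Granting that, the cancellation of leading terms and the descent in the deg-lex ordering are routine, and combining spanning with independence yields that the $G$-irreducible words form a vector-space basis of $F/I$.
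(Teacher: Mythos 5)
Your proposal is correct and follows essentially the route the paper itself takes: the statement is cited to Bokut--Kukin, and the paragraph following it sketches exactly your spanning argument (repeatedly replacing a summand with reducible leading word by a congruent element of strictly smaller leading word), while linear independence is, as you observe, immediate from Theorem~\ref{thmShirshov}. One small repair: for the infinite alphabet $X=\{x_1,x_2,\ldots\}$ there are infinitely many words of each length, so the well-foundedness of deg-lex should instead be deduced from the fact that $X$ is well-ordered (the lex order on words of a fixed length is then a finite lexicographic product of well-orders), after which your descent terminates as claimed.
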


The process of writing an arbitrary element of $F/I$ as a linear combination of basis elements is as follows. If $g = \sum_i \mu_i g_i \in F$, but some $\bar{g}_j$ contains $\bar{r}$ as a subword, for some $r \in G$, we change $g$ with $g' = g - \mu_j p$, where $p$ is an element of the ideal generated by $r$ and and such that $\bar{p} = \bar{g}_j$, such as in the definition of second-order composition. Notice that $g$ and $g'$ are equivalent in $F/I$. By applying such transformations $g \to g'$ finitely many times, we reach a linear combination of $G$-irreducible words.

We note that any ideal $I$ of a free Lie algebra $F$ can be generated by a suitable (possibly infinite) Gröbner-Shirshov basis; see \cite[Lemma~3.2.5]{BokutKukinBook}.
\begin{example}
 If $L$ is a Lie algebra with basis $\{e_i\}_{i \in M \subseteq \N}$ and operation determined by $[e_i,e_j] =\sum_{t \in M} \theta_{i,j}^t e_t$, then $G = \{ e_i e_j - \sum_{t \in M} \theta_{i,j}^t e_t \mid i > j\}$ is a GS-basis in the free Lie algebra $F$ with free basis $\{e_i\}_{i \in M}$.
\end{example}

\begin{example}  \label{ex.wi}
 Let $G = \{w_i\}_{S \subseteq \N}$, where $w_i$ are the words from Example~\ref{words.wi} and $S \subseteq \N$ is any subset. There is no overlap between elements of $G$, so $G$ is a GS-basis for the ideal that it generates in the free Lie algebra containing generators  $a>b$.
\end{example}

Another important fact about GS-bases of Lie algebras is that they behave well with respect to the passage to the universal enveloping algebra, in the sense that if $G$ is a GS-basis in the free Lie algebra $F$, then its image in $U(F)$ is a GS-basis in the sense of associative algebras (essentially, its image is suitable to the application of some version of the Diamond Lemma such as in \cite{Bergman1978}). In particular, we have the following.
\begin{prop}  \label{cd1}
 Suppose that $G \subset F$ is a Gröbner-Shirshov basis without overlap between its elements and let $I$ be the ideal of $F$ generated by $G$. Then the Lie algebra $L = F/I$ is of cohomological dimension at most $2$.
\end{prop}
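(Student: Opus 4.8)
The plan is to pass to the universal enveloping algebra and exhibit an explicit free resolution of the trivial module of length at most two. Recall that the cohomological dimension of $L$ is the projective dimension of the trivial module $K$ over $U(L)$, since $H^n(L,M)=\operatorname{Ext}^n_{U(L)}(K,M)$ for every $L$-module $M$; so it suffices to produce a free resolution $0\to P_2\to P_1\to P_0\to K\to 0$ of left $U(L)$-modules. Writing $F$ for the free Lie algebra on $X$, we have $U(F)=K\langle X\rangle$, the free associative algebra, and $U(L)=U(F/I)=K\langle X\rangle / J$, where $J$ is the two-sided associative ideal generated by the image of $G$. Indeed, since $I=\ideal{G}$ and every iterated bracket $[f,g]=fg-gf$ lies in the two-sided ideal generated by $g$, the two-sided ideal generated by $I$ coincides with the one generated by $G$. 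As already noted before the statement, the image of $G$ is a reduced Gröbner–Shirshov basis of $J$ in the associative sense, and it inherits the hypothesis of having no overlap ambiguities, because the leading words $\bar r$ (regular words) are the same as in $F$.

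First I would invoke the chain resolution of Anick \cite{Anick1986} for the augmented algebra $A=U(L)=K\langle X\rangle/J$. Its obstruction set is exactly the collection of tips $\{\bar r : r\in G\}$, which form an antichain since $G$ is reduced. The $(n+1)$-st term of the resolution is the free module $A\otimes_K KC_n$, where $C_n$ is the set of $n$-chains: here $C_0=X$, the set $C_1$ consists of the obstructions $\{\bar r\}$, and for $n\ge 2$ an $n$-chain is obtained by successively overlapping obstructions. This yields the initial segment
\[ \cdots \to A\otimes_K KC_2 \to A\otimes_K K\{\bar r\} \to A\otimes_K KX \to A \to K \to 0. \]

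The key point is that the no-overlap hypothesis on $G$ forces $C_n=\emptyset$ for every $n\ge 2$: a $2$-chain requires two (not necessarily distinct) obstructions $\bar r,\bar s$ to overlap, that is, a proper suffix of $\bar r$ to be a proper prefix of $\bar s$, which is precisely an overlap ambiguity of $G$. Hence the resolution truncates to
\[ 0 \to A\otimes_K K\{\bar r : r\in G\} \to A\otimes_K KX \to A \to K \to 0, \]
a free resolution of $K$ of length two, so $\operatorname{pd}_{U(L)}K\le 2$ and therefore $\cd L\le 2$.

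The hard part will be the bookkeeping around the chain combinatorics rather than any deep difficulty: one must make sure the no-overlap hypothesis is read as excluding self-overlaps as well (so that even a single relation does not overlap itself), since otherwise a $2$-chain could arise from a single obstruction; for the bases used here, such as the words $w_i$ of Example~\ref{ex.wi}, this is immediate. Alternatively, one can avoid citing Anick and argue intrinsically on the Lie side: the absence of overlaps means that no first-order composition is available, hence there are no syzygies among the defining relators, so that the relation module $I/[I,I]$, regarded as a $U(L)$-module via the adjoint action, is \emph{free} on the image of $G$. Combined with the standard exact sequence
\[ 0 \to I/[I,I] \to U(L)\otimes_K KX \to \omega(U(L)) \to 0, \]
whose first term being free is exactly the statement to be verified, this again gives a length-two free resolution of $K$ and the bound $\cd L\le 2$.
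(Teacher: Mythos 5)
Your proposal is correct and follows essentially the same route as the paper: pass to $U(L)=U(F)/J$, observe that the image of $G$ is an overlap-free Gröbner--Shirshov basis in the associative sense, and conclude that Anick's resolution of $K$ over $U(L)$ terminates at length $2$ because there are no $n$-chains for $n\geq 2$. The paper states this in one line; you have simply supplied the bookkeeping (and a reasonable alternative via the freeness of the relation module) that the paper leaves implicit.
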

\begin{proof}
 The image of $G$ in $U(F)$ is a GS-basis without overlap in the sense of associative algebras, so Anick's resolution \cite{Anick1986} for $K$ over $U(L)$ is of length $2$.
\end{proof}

The next example will be used in Section~\ref{Applications}.
\begin{example}   \label{exV}
 Set $x>y>z$ and let
 \[ V = \langle x, y, z \mid x^2 y xy - x,\ \ x^2 y^2 xy - y, \ \ x^2y^3xy-z\rangle.\]
 It is clear that $V$ is perfect. The defining relators form a GS-basis $G$ without overlap, so $V$ has cohomological dimension $2$. In fact, tensoring Anick's resolution with $K$ over $U(V)$, we obtain  $0 \to K^3 \to K^3 \to K \to 0$, because $V$ has three generators and three relators. Using that $H_1(V,K) = 0$, we conclude that $H_2(V,K)=0$ too. Another piece of information that we will use is that the Lie subalgebra $\langle y, z \rangle$ is free, since every word depending only on $y$ and $z$ is $G$-irreducible.
\end{example}

\section{An analogue of the Rips construction}
In this section we prove Theorem~\ref{RipsVersion}. Let
\[Q= \langle x_1, \ldots, x_d \mid r_1, \ldots, r_m \rangle\]
be a finitely presented Lie algebra. Write each $r_i$ as a linear combination of regular words on the generators $x_1 < \cdots < x_d$. Let $q\geq 2d$ be an integer greater than the length of the longest word appearing on any $r_i$. Define $L$ as the Lie algebra generated by $x_1 < \cdots < x_d < b < a$, with the set $G$ of defining relators given by:
\begin{enumerate}
 \item\label{type1} $a^2 b^i a b - ax_i$, for $1 \leq i \leq d$,
 \item\label{type2} $a^2 b^{d+i} a b - bx_i$, for $1 \leq i \leq d$, and
 \item\label{type3} $a^2 b^{q+i} a b - r_i$, for $1 \leq i \leq m$.
\end{enumerate}
Notice that $G$ is a finite GS-basis: the leading terms of its elements are as in Example~\ref{ex.wi}, so there is no overlap. In particular, $L$ has cohomological dimension at most $2$ by Proposition~\ref{cd1}.

The obvious map $L \to Q$ that kills $a$ and $b$ is clearly surjective. Moreover, its kernel is generated by $a$ and $b$ as a Lie subalgebra, in view of the relators of types~\ref*{type1} and~\ref*{type2}.

\begin{prop}
 $L$ is SQ-universal.
\end{prop}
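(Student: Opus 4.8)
The plan is to show that $L$ is SQ-universal by directly exhibiting, for an arbitrary countably generated Lie algebra $H$, a quotient of $L$ into which $H$ embeds. Since $L$ surjects onto $Q$ and $Q$ is arbitrary (finitely presented), the natural strategy is *not* to go through $Q$ but rather to exploit the free-generation behavior hidden inside $L$. The key structural observation is that the relators of types 1–3 express certain regular words $(w_i) = a^2b^iab$ as equal to fixed elements ($ax_i$, $bx_i$, or $r_i$), but only for the finitely many indices $i$ in the ranges used. For all *other* indices $i$—those with $i > q+m$, say—the word $a^2b^iab$ does not appear as the leading term of any relator in $G$, and moreover $G$ has no overlap ambiguity. **First I would** therefore invoke Theorem~\ref{thmShirshov}: since $G$ is a GS-basis, a nonzero element of $F$ survives in $L = F/\langle\langle G\rangle\rangle$ whenever its leading word contains no $\bar r$ with $r \in G$ as a subword.

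**The core step** is to identify a free Lie subalgebra inside $L$ on infinitely many generators. Consider the elements $c_i = a^2 b^{q+m+i} a b \in L$ for $i \geq 1$; these are the images of the regular words $w_{q+m+i}$ from Example~\ref{words.wi}, whose leading words $\overline{w_{q+m+i}} = a^2 b^{q+m+i} a b$ are all $G$-irreducible because no relator in $G$ has a leading term equal to or contained in them (the relator leading terms use only indices up to $q+m$, and there is no overlap). I would argue, in the spirit of Example~\ref{exV} where $\langle y,z\rangle$ was shown to be free because all words in $y,z$ are $G$-irreducible, that the subalgebra $\langle c_1, c_2, \ldots \rangle$ is a free Lie algebra of countably infinite rank. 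The point is that any nonzero Lie polynomial in the $c_i$ has a leading word that is a product of the regular words $w_{q+m+i}$, and—using Lemma~\ref{LT.product} together with the structure of these particular words—such a leading word remains $G$-irreducible, so the element is nonzero in $L$ and no relations among the $c_i$ are introduced.

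**Once the free subalgebra is in hand,** SQ-universality follows from the classical fact that the free Lie algebra of countably infinite rank is itself SQ-universal: any countably generated Lie algebra $H$ is a quotient of a free Lie algebra $F_\infty$ on countably many generators, and mapping $F_\infty$ onto $\langle c_1, c_2, \ldots\rangle \cong F_\infty$ gives a surjection from a subalgebra of $L$ onto $H$. To convert this into an embedding of $H$ into a \emph{quotient} of $L$, I would extend the ideal of relations defining $H$ (pulled back to the $c_i$) to an ideal of $L$ and verify, again via a GS-basis computation, that the images of the $c_i$ generate a copy of $H$ in the quotient $L/\langle\langle \cdots \rangle\rangle$.

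**The main obstacle** I expect is the verification that $\langle c_1, c_2, \ldots\rangle$ is genuinely free in $L$: I must confirm that leading words of arbitrary bracketed Lie monomials in the $c_i$ never accidentally contain a relator leading term $a^2 b^j a b$ with $j \le q+m$ as a subword, and never create a new overlap. This requires a careful analysis of how the regular bracketings of the $w_{q+m+i}$ concatenate under the Lie product—precisely the kind of combinatorial control that Lemma~\ref{LT.product} and the explicit regular-word structure in Example~\ref{words.wi} are designed to provide. The large gap ($q \ge 2d$ and the offset by $q+m$) between the indices used in relators and those used for the $c_i$ is what should make this separation clean.
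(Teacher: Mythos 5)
Your instinct to use the words $a^2b^{q+m+i}ab$ with indices beyond those occurring in $G$ is exactly right, and these are indeed the words the paper uses to name the generators of an arbitrary countably generated algebra. However, your argument has a genuine gap at what you yourself call ``the main obstacle'': the claim that, after substituting $c_i = a^2b^{q+m+i}ab$ into the defining relators of $H$ and adjoining the results to $G$, one still obtains a Gröbner-Shirshov basis in whose quotient $H$ embeds. This is not a routine check. You must show (i) that the leading word of an arbitrary Lie polynomial $s(c_1,c_2,\ldots)$ is the block-by-block substitution of $\bar{s}$, and (ii) that solvability of the compositions among the relators of $H$ is preserved under this substitution (two relators of $H$ whose leading words overlap in some $e_j$ produce, after substitution, an overlap in the block $a^2b^{q+m+j}ab$ whose resolution must be re-verified). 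Point (ii) in particular is the substance of the embedding theorems of \cite{BokutChenMo2010}; it does not follow from Lemma~\ref{LT.product} alone, and you have not supplied it. Note also that the intermediate step --- exhibiting a free subalgebra $\langle c_1,c_2,\ldots\rangle$ of infinite rank --- buys you nothing by itself: an infinite-rank free subalgebra of $L$ does not imply that every countably generated algebra embeds in a quotient of $L$, since the ideal of $L$ generated by the pulled-back relations can meet the free subalgebra in more than the ideal it generates there; this is why you are forced back to the GS-basis verification anyway.

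The paper sidesteps all of this with a small but decisive change of setup. Given $H=\langle e_1,e_2,\ldots\mid S\rangle$ with $S$ a GS-basis, it does not substitute the $c_i$ into $S$; instead it keeps the $e_i$ as genuine new generators, ordered below $b$ and $a$, imposes the relators $s\in S$ verbatim in those letters, kills the words $a^2b^iab$ for $1\le i\le q+m$, and adds the identification relators $a^2b^{q+m+i}ab-e_i$. With this ordering every relator involving $a$ and $b$ has as leading word a single block $a^2b^jab$, so the only compositions are those among elements of $S$, which are solvable by hypothesis; hence the one-letter words $e_i$ are irreducible and $H$ embeds. That the resulting algebra $P$ is a quotient of $L$ is then immediate (send $x_j\mapsto 0$). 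I would encourage you either to adopt this device or, if you want to keep your substitution approach, to prove points (i) and (ii) above in full.
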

\begin{proof}
Let $B=\langle e_1, e_2, \ldots \mid S\rangle$ be a countably generated Lie algebra, where $S$ is a GS-basis with respect to the generators $e_i$, for the ordering determined by $e_i < e_j$ if and only if $i < j$. Then $B$ is embeddable in the Lie algebra $P$ with generators $e_1<e_2< \cdots < e_n < \cdots < b< a$ and defining relators:
\begin{enumerate}
 \item[(i)] $s$, for $s \in S$,
 \item[(ii)] $a^2 b^i a b$, for $1 \leq i \leq q+m$, and
 \item[(iii)] $a^2 b^{q+m+i} a b - e_i$, for $i \geq 1$.
\end{enumerate}
There is overlap only between elements of $S$, but their compositions are solvable because $S$ is already a GS-basis. So the set of defining relators of $P$ is a GS-basis; this explains why the obvious map $B \to P$ is injective. Finally, it is clear that $P$ is a quotient of $L$.
\end{proof}

\begin{prop}
 $L$ has trivial center.
\end{prop}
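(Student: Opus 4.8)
The plan is to argue by contradiction using the normal form supplied by the Gröbner–Shirshov basis $G$. Suppose $0 \neq c \in Z(L)$ and represent it by its normal form $\tilde{c} \in F$, the unique linear combination of $G$-irreducible regular words mapping to $c$; let $u = \overline{\tilde{c}}$ be its leading word, which is a nonempty $G$-irreducible regular word. Since $c$ is central, both $[\tilde{c},a]$ and $[\tilde{c},b]$ lie in the defining ideal $I$. The key computational input is Lemma~\ref{LT.product}: because $a$ is the largest generator, $a >_{lex} u$ for every regular word $u \neq a$, so $[\tilde{c},a]$ has leading word $au$; and whenever $u$ begins with $a$ we have $u >_{lex} b$, so $[\tilde{c},b]$ has leading word $ub$. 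In particular these brackets are nonzero in $F$, so Theorem~\ref{thmShirshov} forces their leading words to contain the leading word $a^2 b^j a b$ of some relator of $G$ — recall from Example~\ref{ex.wi} that these $a^2 b^j a b$ are the only relator leading words, and that each contains more than one occurrence of the letter $a$.

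First I would dispose of the case where $u$ does not involve the letter $a$. Then the concatenation $au$ contains exactly one occurrence of $a$, whereas every relator leading word $a^2 b^j a b$ contains several; hence $au$ is itself $G$-irreducible. But $[\tilde{c},a] \in I$ is nonzero with this $G$-irreducible leading word, contradicting Theorem~\ref{thmShirshov}.

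It remains to treat the case where $u$ involves $a$; since a regular word begins with its greatest letter, $u$ begins with $a$. If $u = a$, then $[\tilde{c},b]$ has the $G$-irreducible leading word $ab$, again contradicting Theorem~\ref{thmShirshov}. Otherwise $u$ has length at least $2$, and $ub$ (the leading word of $[\tilde{c},b]$) must contain some relator leading word $a^2 b^j a b$. Since $u$ is $G$-irreducible, this occurrence is forced to use the final appended letter $b$; as $a^2 b^j a b$ ends in $b$, the occurrence must be a suffix of $ub$, so that $u$ ends in the letter $a$. This is the crux: a regular word of length at least $2$ cannot end in the maximal letter $a$, since it would then be lexicographically smaller than the cyclic rotation bringing that final $a$ to the front. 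This contradiction shows there is no such $u$, and as all cases are impossible we conclude $Z(L) = 0$.

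I expect the main obstacle to be the bookkeeping in the final case — specifically, justifying that the relator leading word must appear as a genuine suffix of $ub$ straddling the boundary, and invoking cleanly the standard property of regular (Lyndon) words that none of length at least $2$ terminates in the top letter. A secondary point requiring care is that Lemma~\ref{LT.product} applies only under strict inequality of leading words, which is precisely why the degenerate possibilities $u = a$ and $u$ avoiding $a$ are separated out and dispatched through the observation that $ab$ and $au$ are $G$-irreducible.
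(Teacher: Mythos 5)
Your proof is correct, but it follows a genuinely different route from the paper's. The paper brackets an arbitrary non-central candidate against the \emph{smallest} generator $x_1$: since $\bar f >_{lex} x_1$ for any $f\notin K\cdot x_1$ written in $G$-irreducible normal form, Lemma~\ref{LT.product} gives $\overline{[f,x_1]}=\bar f x_1$, and because no relator leading word $a^2b^jab$ involves $x_1$, appending $x_1$ preserves $G$-irreducibility; Theorem~\ref{thmShirshov} then yields $C_L(x_1)=K\cdot x_1$ and hence $Z(L)=0$ in one stroke, with no case analysis. You instead test a putative central element against $a$ and $b$ and split on the leading word $u$, which forces you to import two combinatorial facts about regular (Lyndon) words — that a regular word begins with its greatest occurring letter, and that one of length at least $2$ cannot end in it — in order to rule out the case where $ub$ ends in some $a^2b^jab$. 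Those facts are true under the paper's conventions and your deductions from them are sound (in particular, the occurrence of $a^2b^jab$ in $ub$ must use the appended $b$, hence be a suffix, hence force $u$ to end in $a$), so the argument closes. What the paper's choice of $x_1$ buys is the complete avoidance of this Lyndon-word bookkeeping, plus the slightly stronger conclusion that the centralizer of $x_1$ is exactly $K\cdot x_1$; what your version buys is independence from the presence of the generators $x_i$, i.e., it only uses the shape of the leading words of $G$ in the letters $a$ and $b$.
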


\begin{proof}
Let $f \in L \smallsetminus K \cdot x_1$ be any element and let us check that $f$ does not commute with $x_1$. Write $f$ as a linear combination of regular words on the generators $x_1, \ldots, x_d, b, a$. Since $x_1$ is the minimum letter in the lexicographic order, we have $\bar{f} >_{lex} x_1$, so the leading word of $[f,x_1]$ is $\bar{f} x_1$ by Lemma~\ref{LT.product}. Since the leading words of elements in $G$ do not involve $x_1$, we see that $\bar{f} x_1$ is $G$-irreducible as soon as $\bar{f}$ is $G$-irreducible. By Theorem~\ref{thmShirshov} we see that $[f,x_1]\neq 0$. Thus the centralizer of $x_1$ in $L$ is $K \cdot x_1$, and consequently $Z(L) = 0$.
\end{proof}

In order to conclude the proof of Theorem~\ref{RipsVersion}, we need to show that $L$ is residually nilpotent. It will be enough to prove the following claim.

\bigskip
\textbf{Claim:} If $0 \neq f = \sum \lambda_i f_i$ represents an element of $L^n$, where each $f_i$ is a $G$-irreducible regular word and $\lambda_i \in K$, then $n$ is bounded from above in terms of $\len(\bar{f})$.

\bigskip

 Here $L^n$ is the $n$-th term of the lower central series of $L$. If the claim is true, then no element $f \in L \smallsetminus\{0\}$ can be in $\cap_{n =1}^{\infty} L^n$, that is, $L$ is residually nilpotent.

\bigskip
\emph{Proof of the Claim:} Let $f$ be as above. By hypothesis $f$ is equivalent modulo $G$ to a linear combination $g = \sum_i \mu_i g_i$ of non-associative regular words of length at least $n$.

Recall from Section~\ref{prelim.GS} the process of obtaining $f$ from $g$: if some $\bar{g}_i$ contains some $\bar{r}$ as a subword, with $r \in G$, we change $g$ with $g' = g- \mu_i p$, where $p \in \llangle r \rrangle$ and $\bar{p} = \bar{g}_i$. Thus the summand associated with $\bar{g}_i$ is replaced with a linear combination of deg-lex smaller terms involving some letters of $\bar{g}_i$ and some letters of the words in the expression of $r$. After repeating this process finitely many times, we have rewritten $g$ as a linear combination of $G$-irreducible words.

It follows that there is a sequence of words
\[ w_0 \to w_1 \to w_2 \to \cdots \to w_t = \bar{f},\]
where $w_0$ is one of the $\bar{g}_i$, and either $w_{j+1}$ and $w_j$ have the same length, or $w_{j+1}$ is obtained from $w_j$ by changing a subword of the form $a^2b^kab$ with some word involved in one of the relators $r \in G$, \emph{but not the leading one}, and possibly rearranging the order of the letters. In particular, the resulting word has its length decreased by at most some fixed amount $C>0$, while at the same time increasing the number of instances of one of the letters $x_i$. This is because all elements of $G$ are the sum of a highest length term involving only $a$ and $b$, with some other shorter words all of which involve some $x_i$. The constant $C$ depends only on the length of the words involved in elements of $G$.

Thus $n \leq (C+1) \cdot \len(\bar{f})$. Indeed, in the worst case the word $w_t$ has precisely $t$ letters from $\{x_1, \ldots, x_d\}$, and $w_0$ is a word of length $(C+1) \cdot t$ that does not involve any $x_i$. This concludes the proof of the claim, and thus of Theorem~\ref{RipsVersion} as well.

\section{Applications}  \label{Applications}
In this section we prove Corollary~\ref{coro.paraequiv}. In what follows, we denote by \[H_i(L)=H_i(L,K)= Tor_i^{U(L)}(K,K)\] the $i$-th homology group of a Lie algebra $L$ over $K$. We will make use of the Lyndon-Hochschild-Serre (LHS) spectral sequence associated with a short exact sequence of Lie algebras (see e.g. \cite[Section~7.5]{Weibel}).

First, we need a
tool to determine that certain homomorphisms of Lie algebras induce isomorphisms on the lower central quotients.

\begin{lemma}  \label{LemmaStallings}
Suppose that $\varphi \colon A \to B$ is a homomorphism of Lie algebras that induces an isomorphism on $H_1(-)$ and a surjection on $H_2(-)$. Then $\varphi$ induces isomorphisms $A/A^n \to B/B^n$ for all $n \geq 1$.
\end{lemma}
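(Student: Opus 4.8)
The plan is to argue by induction on $n$. The cases $n=1$ (where $A/A^1 = 0 = B/B^1$) and $n=2$ (where $A/A^2 = H_1(A) \to H_1(B) = B/B^2$ is the given isomorphism) are immediate. For the inductive step I would assume that $\varphi$ induces an isomorphism $\bar\varphi_n \colon A/A^n \to B/B^n$ and deduce the same for $n+1$. The reduction is standard: $\varphi$ carries the central extension $0 \to A^n/A^{n+1} \to A/A^{n+1} \to A/A^n \to 0$ to the analogous one for $B$, so by the five lemma it suffices to prove that the induced map $r \colon A^n/A^{n+1} \to B^n/B^{n+1}$ on the central kernels is an isomorphism.

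To get hold of $r$, I would apply the five-term exact sequence of the LHS spectral sequence to $0 \to A^n \to A \to A/A^n \to 0$, namely $H_2(A) \to H_2(A/A^n) \to A^n/A^{n+1} \to H_1(A) \to H_1(A/A^n) \to 0$, where the third term is $A^n/[A,A^n] = A^n/A^{n+1}$. Since $A^n \subseteq A^2$, the map $H_1(A) \to H_1(A/A^n)$ is the identity of $A/A^2$, so the connecting map $A^n/A^{n+1} \to H_1(A)$ vanishes and we obtain a natural identification $A^n/A^{n+1} \cong \coker(\alpha)$, where $\alpha \colon H_2(A) \to H_2(A/A^n)$. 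Writing $\beta \colon H_2(B) \to H_2(B/B^n)$ for the analogue, naturality of the sequence for the morphism of extensions $(\varphi|_{A^n}, \varphi, \bar\varphi_n)$ — legitimate since $\varphi(A^n) \subseteq B^n$ — presents $r$ as the map of cokernels induced by the square with verticals $\varphi_* \colon H_2(A) \to H_2(B)$ and $g_n = H_2(\bar\varphi_n) \colon H_2(A/A^n) \to H_2(B/B^n)$.

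The key point — and where I expect the real work to sit — is that the inductive hypothesis delivers more than surjectivity on $H_2$ of the truncations: since $\bar\varphi_n$ is an \emph{isomorphism} of Lie algebras, functoriality of homology makes $g_n$ an isomorphism, not merely an epimorphism. Surjectivity of $r$ is then formal from $g_n$ being onto. For injectivity, take $x \in H_2(A/A^n)$ with $g_n(x) \in \img\beta$; because $\varphi_*$ is surjective and $g_n \alpha = \beta \varphi_*$, one has $\img\beta = g_n(\img\alpha)$, so $g_n(x) \in g_n(\img\alpha)$, and injectivity of $g_n$ forces $x \in \img\alpha$, i.e. $x$ maps to $0$ in $\coker(\alpha)$. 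Hence $r$ is an isomorphism and the five lemma closes the induction. The main obstacle is precisely this injectivity of $r$ on the successive quotient: it fails in general for a map that is only an $H_1$-isomorphism, and it is rescued exactly by the hypothesis that $\varphi_*$ is surjective on $H_2$, used in tandem with the upgrade of $g_n$ from an epimorphism to an isomorphism that the inductive step itself supplies.
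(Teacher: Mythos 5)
Your argument is correct. The paper does not prove this lemma itself but simply cites \cite[Theorem~2.14]{IMZ2020} and \cite{KnusStammbach1967}; what you have written is precisely the classical Stallings--Stammbach induction contained in those references: five lemma on the central extensions, the five-term exact sequence identifying $A^n/A^{n+1}$ with $\operatorname{coker}\bigl(H_2(A)\to H_2(A/A^n)\bigr)$ for $n\geq 2$, and the cokernel comparison using surjectivity of $\varphi_*$ on $H_2$ together with the fact that the inductive hypothesis upgrades $H_2(\bar\varphi_n)$ to an isomorphism.
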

\begin{proof}
See \cite[Theorem.~2.14]{IMZ2020} or \cite{KnusStammbach1967}.
\end{proof}

Choose a finitely presentable Lie algebra $Q$ of finite cohomological dimension $\cd Q \geq 2$ and satisfying \[H_1(Q)=H_2(Q)=0.\] For instance, $Q= \mf{sl}_2(K)$ will do if $\charac K \neq 2$. Form the exact sequence
 \begin{equation}   \label{exten}
0 \to N \to L \to Q \to 0
 \end{equation}
with Theorem \ref{RipsVersion}. By construction $L$ is residually nilpotent, and therefore so is $N$. Moreover, $L$ is finitely presented of cohomological dimension at most $2$ and $N$ is finitely generated. If $N$ were finitely presented, then by a version of Feldman's theorem (see \cite[page~70]{BieriBook}) for Lie algebras, the cohomological dimension of $L$ would be equal to $\cd(N) + \cd(Q) > 2$, which is a contradiction.

\begin{prop} \label{induces.iso}
 The inclusion $N \subseteq L$ in \eqref{exten} induces isomorphisms $N/N^n \simeq L/L^n$ for all $n \geq 1$.
\end{prop}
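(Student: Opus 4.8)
The plan is to apply Lemma~\ref{LemmaStallings} to the inclusion $\varphi \colon N \hookrightarrow L$, so it suffices to show that $\varphi$ induces an isomorphism on $H_1(-)$ and a surjection on $H_2(-)$. The main tool is the five-term exact sequence (low-degree exact sequence) of the LHS spectral sequence attached to the extension \eqref{exten}, together with the hypotheses $H_1(Q)=H_2(Q)=0$.

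First I would write down the five-term exact sequence in homology for $0 \to N \to L \to Q \to 0$:
\[
H_2(L) \to H_2(Q) \to (H_1(N))_Q \to H_1(L) \to H_1(Q) \to 0,
\]
where $(H_1(N))_Q$ denotes the coinvariants of the natural $Q$-action on $H_1(N) = N/[N,N]$. Since $H_1(Q) = H_2(Q) = 0$ by the choice of $Q$, this collapses to an isomorphism $(H_1(N))_Q \simeq H_1(L)$. The subtlety is that this is the sequence of coinvariants, not of $H_1(N)$ itself, so I would need to argue that the $Q$-action on $H_1(N)$ is trivial, so that the coinvariants coincide with $H_1(N)$ and the inclusion genuinely induces an isomorphism $H_1(N) \simeq H_1(L)$. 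I expect this to follow because $N$ is generated as a Lie subalgebra by the two elements $a,b$ (the images of the two generators from the Rips construction), and the conjugation action of $L$ on $N/[N,N]$ factors through $Q$; one then checks that the induced action is trivial on the spanning classes, for instance because the relators of type~\ref{type1} and~\ref{type2} express $[a,x_i]$ and $[b,x_i]$ back in terms of $a,b$ modulo higher terms, forcing the action on $H_1(N)$ to be nilpotent and, after closer inspection, trivial.

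For the $H_2$ statement, I would extend the analysis one step further. Having identified $H_1(N)\simeq H_1(L)$, the relevant portion of the spectral sequence controls the cokernel of $H_2(N) \to H_2(L)$: the obstruction to surjectivity lives in a subquotient built from $H_2(Q)$ and from $H_1(Q)\otimes H_1(N)$-type terms on the $E^2$-page, all of which vanish because $H_1(Q)=H_2(Q)=0$. Concretely, the edge map $H_2(L)\to H_2(Q)=0$ and the vanishing of the entire bottom row $E^2_{p,0}$ for $p\geq 1$ force the differentials into $E^2_{0,2}=H_2(N)_Q$ to be the only contributions, yielding that $H_2(N)\to H_2(L)$ is onto. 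Once both conditions of Lemma~\ref{LemmaStallings} are verified, the conclusion $N/N^n \simeq L/L^n$ for all $n\geq 1$ is immediate.

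The hard part will be the bookkeeping with the $Q$-module structure in the five-term sequence: passing from the coinvariants $(H_1(N))_Q$ to $H_1(N)$ requires showing the action is trivial, and this is where the explicit structure of the Rips-type relators — and the fact that $N=\langle a,b\rangle$ is generated by the two "new" generators — must be used rather than formal spectral-sequence arguments. I would carry out that triviality check first, since everything downstream depends on it.
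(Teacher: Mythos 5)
Your proposal follows essentially the same route as the paper: both reduce to Lemma~\ref{LemmaStallings} and obtain the surjection on $H_2$ from the LHS spectral sequence using $H_1(Q)=H_2(Q)=0$ together with the triviality of the $Q$-action on $H_1(N)$ (the paper checks the $H_1$-isomorphism directly from the presentation rather than via the five-term sequence, but that is a cosmetic difference). One small simplification: the triviality of the action needs no nilpotency argument or ``closer inspection,'' since the type (1) and type (2) relators give $[a,x_i]=a^2b^iab$ and $[b,x_i]=a^2b^{d+i}ab$, which already lie in $[N,N]$ because $N=\langle a,b\rangle$, so each $x_i$ acts as zero on the classes of $a$ and $b$ that span $H_1(N)=N/[N,N]$.
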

\begin{proof}
Using that $H_1(Q)=0$, one verifies directly from the presentation of $L$ that $N \hookrightarrow L$ induces an isomorphism $H_1(N) \to H_1(L)$. Next, look at the LHS spectral sequence $E_{p,q}^2 = H_p(Q,H_q(N))$ converging to $H_2(L)$. By choice $E_{2,0}^2=H_2(Q) = 0$. By construction $Q$ acts trivially on $H_1(N)$, so $E_{1,1}^2=H_1(Q,H_1(N)) \simeq H_1(Q) \otimes H_1(N) = 0$. It follows that $H_2(L)$ is isomorphic to $E_{0,2}^{\infty}$.

We have $E_{0,2}^2 = H_0(Q,H_2(N)) = H_2(N)_Q$. Using again that $Q$ acts trivially on $H_1(N)$, one finds that $E_{2,1}^2  = 0$, and so $E_{0,2}^2= E_{0,2}^3$. Moreover, $E_{0,2}^{4}  = E_{0,2}^\infty$ is the cokernel of the differential $d \colon E_{3,0}^3 = H_3(Q) \to E_{2,0}^3 = H_2(N)_Q$. It follows that the induced map $H_2(N) \to H_2(L)$ is the composite
\[ H_2(N) \twoheadrightarrow H_2(N)_Q \twoheadrightarrow E_{0,2}^{\infty} \simeq H_2(L),\]
which is clearly surjective. By Lemma~\ref{LemmaStallings}, the result follows.
\end{proof}

By taking $A=N$ and $B=L$, we obtain Corollary~\ref{coro.paraequiv}.

\bigskip

Next, we wish to find a pair $(A,B)$ such as the one above, but with the extra property that $H_2(A)$ is infinite-dimensional. In order to do so, it is enough to follow the construction  with a Lie algebra $Q$ with the additional property that $H_3(Q)$ is of infinite dimension. Indeed, arguing as in the proof of Proposition~\ref{induces.iso}, one finds that $E_{3,0}^{\infty} = E_{3,0}^{4}$ is the kernel of that same differential $d \colon H_3(Q) \to H_2(N)_Q$. As the spectral sequence converges to $H_3(L)=0$ such kernel should be trivial, so if $H_3(Q)$ is of infinite dimension, then so is $H_2(N)$.

It remains to give an example of a Lie algebra $Q$ with satisfying the required properties. First, let $V$ be the Lie algebra from Example~\ref{exV}. Recall that $V$ is perfect, finitely presented, of cohomological dimension $2$, and contains a non-abelian free subalgebra $F= \langle y, z \rangle$. Next, consider the subalgebra $S \subseteq F \oplus F \subseteq V \oplus V$ generated by the elements of the form $(f,-f)$, for all $f \in F$. It is finitely generated, but $H_2(S)$ is infinite-dimensional: see e.g. \cite[Lemma 5.5]{WCCLA2019}. Now, let $W = (V \oplus V) \ast_S (V \oplus V)$ be the amalgamated product. It is clear that $W$ is finitely presented and perfect, and moreover a piece of the associated Mayer-Vietoris sequence gives:
 \[ \cdots \to H_3(V \oplus V)^2 \to H_3(W) \to H_2(S) \to H_2(V \oplus V)^2  \to \cdots\]
 Since $H_3(V \oplus V)^2 = 0 = H_2(V \oplus V)^2$, we find that $H_3(W)$ is infinite-dimensional. Finally, we pass to the universal central extension $Q$ of $W$ (see e.g. \cite[Section~7.9]{Weibel}), which is still perfect, finitely presented, of finite cohomological dimension with $H_3(Q)$ infinite-dimensional, but has the extra property that $H_2(Q)=0$. This ends the proof of Corollary~\ref{coro.paraequiv}.

\section{Embeddings into cohopfian Lie algebras}  \label{sec.embeddings}
In this section we prove the first part of Theorem~\ref{thm.embedding}.

Let $H$ be a countably generated Lie algebra. Suppose that $A$ and $B$ are Lie algebras of the same dimension $n \geq 4$ that do not embed in $H$, and such that $F(A) \neq 0$ and $\dim F(B)\geq 2$. This means that we can choose a non-trivial element $a \in A$ (resp. a pair of linearly independent elements $b,b' \in B$) that are non-generators, that is, they can be omitted from any generating set of $A$ (resp. of $B$).

Choose an ordered basis $\{h_1 <  h_2 < \cdots \}$ of $H$ and ordered bases $\{a_1 <  \cdots < a_n\}$ and $\{b_1 < \cdots < b_n\}$ of $A$ and $B$, respectively, with the property that the elements $a_1$, $b_1$ and $b_n$ are non-generators. Write the structure constants of $A$, $B$ and $H$ as
\[ [a_i, a_j] = \sum_{t} \alpha_{i,j}^t a_t, \  \  [b_i, b_j] = \sum_{t} \beta_{i,j}^t b_t, \ \ \text{and } \ \ [h_i, h_j] = \sum_{t} \theta_{i,j}^t h_t,\]
for $i>j$.

Let $X$ be the set of all the $a_i$, $b_i$ and $h_i$, with the ordering extending the orderings above and satisfying $h_i < b_j < a_k$, for all possible $i$, $j$ and $k$. Let $L$ be the Lie algebra generated by $X$ with defining relators:
\begin{enumerate}
 \item $[a_i, a_j] - \sum_{t} \alpha_{i,j}^t a_t$ for $1 \leq j < i \leq n$,
 \item $[b_i, b_j] - \sum_{t} \beta_{i,j}^t b_t$ for $1 \leq j < i \leq n$,
 \item $[h_i, h_j] - \sum_{t} \theta_{i,j}^t h_t$ for $1\leq j<i$, and
 \item $a_n^2 b_1^i a_n b_1 - h_i$ for $i \geq 1$.
\end{enumerate}
Write $G$ for the set of all these relators. Observe that this is a Gröbner-Shirshov basis. Indeed, the only possible compositions are between two relators of the same type (1), (2) or (3), and these are solvable by choice. It follows that the map $H \to L$ that takes $h_i \in H$ to its twin $h_i \in L$ is well defined and injective, since the one-letter words $h_i$ are all $G$-irreducible. So $H$ embeds into $L$.

We wish to show that $L$ is cohopfian. For that, we will show that an injective homomorphism $\varphi \colon L \to L$ takes the subspace $A+B$ onto itself, therefore is surjective too.

In order to see what $\varphi(A)$ and $\varphi(B)$ can be, we study Lie subalgebras of $L$ of dimension $n$. Suppose that $\g \subset L$ is one of these. We may choose a basis $g_1, \ldots, g_n$ of $\g$, all of its elements written as linear combinations of $G$-irreducible regular words, such that
\[ \bar{g}_1 <_{lex} \bar{g}_2 <_{lex} \cdots <_{lex}  \bar{g}_n.\]
The words $\bar{g}_i  \bar{g}_j$ must be $G$-reducible for all $i>j$. Indeed, since $\langle g_i, g_j\rangle$ is finite-dimensional, the set  $\{\delta_k \coloneqq [\underbrace{ g_i, \ldots, g_i}_{k \text{ times}},g_j] \mid k \geq 1\}$ is linearly dependent. In particular, some $\bar{\delta}_k = \bar{g}_i^k \bar{g}_j$ is $G$-reducible, and by inspection of the leading words of $G$ we see that  $\bar{g}_i  \bar{g}_j$ must be $G$-reducible already.

Suppose that $\bar{g}_i \bar{g}_j = u a_n^2 b_1^k a_n b_1 v$ for some $i>j$ and some $k$. Then $\bar{g}_i$ ends with $a_n^2 b_1^k$ and $\bar{g}_j$ starts with $a_n b_1$. Since $a_n$ is the highest letter in our alphabet, $\bar{g}_i$ must start with $a_n^2$. By inspection of the highest words of elements of $G$, this is possible only if $i=n$, because otherwise $\bar{g}_n \bar{g}_i$ is $G$-irreducible. Now look at $\bar{g}_{n-1}$ and $\bar{g}_{n-2}$: both must start with $a_n b_1$, so that $\bar{g}_n \bar{g}_{n-1}$ and $\bar{g}_n \bar{g}_{n-2}$ can be reducible, but at the same time $\bar{g}_{n-1}$ must end with $a_n^2 b_1^r$ for some $r$, so that
$\bar{g}_{n-1} \bar{g}_{n-2}$ can be reducible. This is a contradiction, as a word of the form $a_n b_1 u a_n^2 b_1^r$ cannot be regular.

We conclude that all $\bar{g}_i \bar{g}_j$ contain as a subword the leading word of some relator of type (1), (2) or (3). That is, for all $i>j$, by putting together the last letter of $\bar{g}_i$ and the first letter of $\bar{g}_j$, we find $h_k h_l$, $b_k b_l$ or $a_k a_l$ for some $k>l$.

\subsection*{Case 1:} $\bar{g}_n \bar{g}_1 = u h_k h_l v$ for some $k>l$.

\bigskip

We see that $\bar{g}_n$ ends with $h_k$, and so any $\bar{g}_j$, for $j \geq 2$, begins with some $h_i$. It follows that in all $\bar{g}_i \bar{g}_j$ we find a subword of type $h_k h_l$.
Regular $G$-irreducible words that start with some $h_i$ have length $1$, so the sequence of the $\bar{g}_i$ is of the form
\begin{equation} \label{seqcase1}
 h_{j_1} <_{lex} \cdots <_{lex} h_{j_{n-1}} <_{lex}  u_1 h_{j_n},
\end{equation}
with $j_1 < \cdots < j_{n-1} < j_n$, for some word $u_1$. We need to show that $u_1$ is trivial.

\begin{lemma} \label{L1}
 Suppose that $u = u_0 h_{i_1} \cdots h_{i_n}$ is regular and $G$-irreducible, where $u_0$ is not trivial, its last letter is not some $h_i$, and $i_1 \leq i_2 \leq \cdots \leq i_n$. Then for every $j<i_n$, the highest $G$-irreducible word of $[u,h_j]$ is
 $u_0 h_{i_1} \cdots h_{i_t} h_j h_{i_{t+1}}\cdots h_{i_n}$, where $i_{t} \leq j \leq i_{t+1}$.
\end{lemma}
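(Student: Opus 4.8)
The plan is to prove the slightly more general statement in which the hypothesis $j<i_n$ is dropped: for \emph{every} index $j$ the leading $G$-irreducible word of $[u,h_j]$ is the word obtained by inserting $h_j$ into the sorted block $h_{i_1}\cdots h_{i_n}$ at its order position (when $j\geq i_n$ this insertion just appends $h_j$ at the end). I would then argue by induction on the total length $\len(u)$, since the proof of the stated lemma is exactly the case $j<i_n$.

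First, note that since $u$ is regular it begins with its largest letter; as the tail consists only of $h$'s and $u_0$ ends in some $a_k$ or $b_k$, this largest letter lies in $u_0$ and is a big letter. In particular $u>_{lex}h_j$, so by Lemma~\ref{LT.product} the leading word of $[u,h_j]$ in $F$ is $uh_j$. If $j\geq i_n$, the tail $h_{i_1}\cdots h_{i_n}h_j$ is non-decreasing, so $uh_j$ contains no leading word of a relator of $G$ beyond those already inside $u$; hence $uh_j$ is $G$-irreducible and equals the claimed appended word. This disposes of the base of the induction and of the case $j\geq i_n$, so from now on assume $j<i_n$.

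For the inductive step I would use the standard factorization $(u)=[(P),(Q)]$, where $Q$ is the longest proper regular suffix of $u$ (recalled in Section~\ref{prelim.GS}). Every proper suffix lying inside the sorted tail is non-decreasing, hence not regular, so either $Q=h_{i_n}$ or $Q$ reaches into $u_0$. In the first case $P=u^-:=u_0h_{i_1}\cdots h_{i_{n-1}}$, and the Jacobi identity gives
\[ [u,h_j] = [(u^-),[h_{i_n},h_j]] + [[(u^-),h_j],h_{i_n}]. \]
Using the type-(3) relator $[h_{i_n},h_j]=\sum_t \theta_{i_n,j}^t h_t$, the first summand has length at most $\len(u)$ and is therefore dominated in the deg-lex order; by the inductive hypothesis applied to $u^-$ the second summand has leading word obtained by sorting $h_j$ into $u^-$ and then appending $h_{i_n}$, which (since $j<i_n$) is exactly the sorted insertion into $u$. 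In the second case $P$ is a non-empty prefix of $u_0$ and $Q=q_0h_{i_1}\cdots h_{i_n}$ with $q_0$ a non-empty suffix of $u_0$ ending in a big letter; Jacobi gives $[u,h_j]=[(P),[(Q),h_j]]+[[(P),h_j],(Q)]$. The inductive hypothesis applies to $Q$ (its head $q_0$ is shorter than $u_0$), so the first summand has leading word $P\cdot(\text{sorted insertion into }Q)$, which equals the sorted insertion into $u$; the cross term $[[(P),h_j],(Q)]$ has leading word $Ph_jQ$, placing $h_j$ before the big letters of $q_0$ and hence lexicographically smaller. Reading off the position $i_t\leq j\leq i_{t+1}$ then completes the step.

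The main obstacle is the bookkeeping in these two cases: one must verify that \emph{every} competing term is genuinely dominated. The recurring mechanism is that all relevant words begin with a big letter of $u_0$, so that Lemma~\ref{LT.product} controls the leading words of the brackets, and that any rearrangement pushing $h_j$ (or any $h$) to the left of a surviving big letter strictly lowers the lexicographic order — whereas the sorted insertion keeps the block $u_0$ intact. The delicate point is that the only length-preserving top term is the sorted insertion: the terms produced by the structure constants $\theta_{i_n,j}^t$ shorten the word by one letter and so cannot compete, and this is precisely what makes the leading word independent of the internal shape of $u_0$. Finally I would record that the resulting word is $G$-irreducible — its $u_0$-part is unchanged and its $h$-tail is non-decreasing, so no relator of type (1)–(4) applies — which, together with Theorem~\ref{thmShirshov}, identifies it as the leading normal-form word of $[u,h_j]$.
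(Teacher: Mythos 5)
Your proof follows essentially the same route as the paper's: the standard factorization $(u)=[(P),(Q)]$ via the longest proper regular suffix, the Jacobi identity, Lemma~\ref{LT.product} to control leading words, and a domination argument for the competing terms; your single induction on $\len(u)$ and the explicit strengthening to all $j$ (which the paper's nested induction on $n$ in the base case needs implicitly) are only cosmetic repackagings. The one spot where you are slightly less careful than the paper is the cross term $[[(P),h_j],(Q)]$ in your second case, whose leading word may a priori be $QPh_j$ rather than $Ph_jQ$ when $Ph_j <_{lex} Q$; the paper explicitly considers both possibilities and rules out the second using regularity of $u$ (i.e.\ $PQ >_{lex} QP$), and your ``bookkeeping'' step should do the same.
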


\begin{proof}
By induction on the length of $u_0$. For the base case $u_0 = c$ is some $a_i$ or $b_i$ and we have:
\[ [ u,h_j] = [ [c h_{i_1} \cdots h_{i_{n-1}}, h_{i_n}] , h_j]
= [[c h_{i_1} \cdots h_{i_{n-1}}, h_j], h_{i_n}] + [c h_{i_1} \cdots h_{i_{n-1}}, [h_{i_n}, h_j]].\]
We may assume by induction (now on $n$) that the highest $G$-irreducible term of $[c h_{i_1} \cdots h_{i_{n-1}}, h_j]$ is of the form $c h_{i_1} \cdots h_j \cdots h_{i_{n-1}}$, and then $[[c h_{i_1} \cdots h_{i_{n-1}}, h_j], h_{i_n}]$ has highest irreducible word $c h_{i_1} \cdots h_j \cdots h_{i_{n-1}}h_{i_n}$ by Lemma~\ref{LT.product}. For the second summand, we substitute $[h_{i_n}, h_j]$ with $\sum_t \theta_{i_n,j}^t h_t$, which results in a term of lower degree.

Now assume that $u_0$ has length at least $2$ and write $u =u_1 u_2 h_{i_1} \cdots h_{i_n}$ where the regular bracketing of $u$ is $[ u_1, u_2 h_{i_1} \cdots h_{i_n}]$. Then
 \[ [u, h_j] = [ [u_1,h_j], u_2 h_{i_1} \cdots h_{i_n}] + [ u_1, [u_2 h_{i_1} \cdots h_{i_n}, h_j ]] \eqqcolon  V+W.\]
 Clearly $u_1 >_{lex} h_j$, so by Lemma~\ref{LT.product} the highest $G$-irreducible term of    $[u_1, h_j]$ is smaller or equal to $u_1 h_j$. The inequality will be strict when $u_1$ ends with some $h_k$, for $k> j$, in which case $u_1 h_j$ is $G$-reducible. Thus, depending on which of the monomials is lexicographically larger, and using Lemma~\ref{LT.product} again, the highest $G$-irreducible term of $V$ is at most the maximum between
 \begin{equation} \label{LT.possibilities}
  u_1 h_j  u_2 h_{i_1} \cdots h_{i_n} \text{ and }  u_2 h_{i_1} \cdots h_{i_n} u_1 h_j.
 \end{equation}

Next, consider $W$. By induction, we may assume that the highest $G$-irreducible term of the bracket $[u_2 h_{i_1} \cdots h_{i_n}, h_j ]$ is $u_2 h_{i_1} \cdots h_{i_t} h_j h_{i_{t+1}}\cdots h_{i_n}$. Since $u_1 >_{lex} u_2 h_{i_1} \cdots h_j \cdots h_{i_n}$ (otherwise $u$ is not regular), the highest $G$-irreducible term of $W$ is precisely $\bar{W} = u_1 u_2 h_{i_1} \cdots h_j \cdots h_{i_n}$, as there is no possible reduction.
This is higher than both possibilities in \eqref{LT.possibilities}, so it is actually the highest $G$-irreducible term of $[u,h_j]$.
\end{proof}

The lemma implies, together with Lemma~\ref{LT.product}, that the brackets of the form $[f,h_j]$ will have leading word of length $\len(\bar{f}) + 1$, except for the cases where $f$ is a linear combination of the $h_i$. So in chain~\eqref{seqcase1},
if $\bar{g}_n = u_1 h_{j_n}$ with $u_1$ non-trivial, then the highest term of $[g_n,g_1]$ is a word of length greater than that of $\bar{g}_n$. In particular, such word is  none of the $\bar{g}_i$, which is a contradiction: $g_n$ and $g_1$ belong to the finite-dimensional algebra spanned by $g_1, \ldots, g_n$, so $[g_n, g_1]$ is a linear combination of such elements. We conclude that $u_1$ must be trivial, that is, $\bar{g}_n=h_{j_n}$.

Finally, since any regular word smaller than some $h_j$ is also of the form $h_k$, we see that each $g_i$ is a linear combination of the $h_j$, so
\emph{$\g$ is a subalgebra of $H$}.

\subsection*{Case 2:} $\bar{g}_n \bar{g}_1 = u b_k b_l v$  for some $k>l$.

\bigskip

By inspecting the relations again, one sees that the sequence of the $\bar{g}_i$ must be of the form
\begin{equation} \label{seqcase2}
b_1 v <_{lex} b_2 <_{lex} \cdots <_{lex} b_{n-1} <_{lex} u b_n,
\end{equation}
for some words $v$ and $u$.

Notice that for $2 \leq i \leq n-1$, the highest term of $g_i$ is $b_i$, a word of length $1$. In particular, $g_i$ must be equal to $b_i$ plus a linear combination of the letters which are lexicographically smaller, that is, $b_j$, with $j <i$, and the $h_j$. We show first that the coefficients of the $h_i$ must be zero.

\begin{lemma}  \label{elim.hj}
In the setting above, $g_i$ is the form $g_i = b_i + \sum_{j<i} x_j b_j$ for all $2 \leq i \leq n-1$, with $x_i \in K$.
\end{lemma}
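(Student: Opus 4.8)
The plan is to read off the general shape of $g_i$ from its leading word and then to rule out an $h$-term by bracketing with $g_n$. Since $\bar g_i = b_i$ has length one, every regular word occurring in $g_i$ with non-zero coefficient is a single letter that is $\leq_{lex} b_i$. The single letters with this property are exactly the $b_j$ with $j \leq i$ together with all the $h_k$ (the $a_k$ are excluded, since $a_k >_{lex} b_i$). Hence we may write $g_i = b_i + \sum_{j<i} x_j b_j + \eta_i$ with $x_j \in K$ and $\eta_i = \sum_k y_k h_k$ lying in the copy of $H$ inside $L$. Writing $\beta_i = b_i + \sum_{j<i} x_j b_j$, the claim becomes $\eta_i = 0$. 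I would suppose otherwise and let $h_M$ be the $<_{lex}$-largest letter $h_k$ with $y_k \neq 0$.

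The next step is to examine $[g_n, g_i] = [g_n, \beta_i] + [g_n, \eta_i] \in \g$. For the second summand, Lemma~\ref{LT.product} gives $\overline{[g_n, h_k]} = \bar g_n\, h_k = u b_n h_k$; this word is regular and $G$-irreducible, because the only new adjacency it creates is $b_n h_k$, which is not the leading word of any relator in $G$. Consequently $[g_n, \eta_i]$ has leading word $u b_n h_M$, of length $\len(u) + 2$, occurring with coefficient $\pm y_M \neq 0$. The reason for bracketing on the side of $g_n$ is precisely that $\eta_i$ then meets the known top letter $b_n$ of $\bar g_n$, producing this clean irreducible word.

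The delicate step, which I expect to be the main obstacle, is to show that the $h$-free part $[g_n, \beta_i]$ cannot reproduce or dominate $u b_n h_M$. Here I would argue that every word it contributes is $< u b_n h_M$ in the deg-lex order: bracketing the top monomial $u b_n$ of $g_n$ with a letter $b_j$ produces $u b_n b_j$, which is reducible since $b_n b_j$ is a type-(2) leading word, so it collapses to length at most $\len(u) + 1$; and every lower monomial $w$ of $g_n$ satisfies $w <_{lex} \bar g_n$ with $\len(w) \leq \len(u) + 1$, so any word it yields is either strictly shorter than $\len(u) + 2$ or, at that length, lexicographically smaller than $u b_n h_M$. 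It follows that $\overline{[g_n, g_i]} = u b_n h_M$.

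Finally, the leading word of every non-zero element of $\g$ lies in $\{\bar g_1, \ldots, \bar g_n\} = \{b_1 v,\ b_2, \ldots, b_{n-1},\ u b_n\}$, as these $n$ words are distinct by the chain~\eqref{seqcase2}. But $u b_n h_M$ has length $\geq 2$ (so it is none of $b_2, \ldots, b_{n-1}$), is strictly longer than $u b_n$, and shares its first letter with $\bar g_n$; the latter cannot be $b_1$, since $\bar g_n >_{lex} \bar g_{n-1} = b_{n-1}$ forbids $\bar g_n$ from starting with $b_1$, whence $u b_n h_M \neq b_1 v$. Thus $u b_n h_M \notin \{\bar g_i\}$, contradicting $[g_n, g_i] \in \g$. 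Therefore $\eta_i = 0$ and $g_i = \beta_i$, as claimed.
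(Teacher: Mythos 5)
Your argument reaches the right conclusion but travels a genuinely different route from the paper. You bracket each middle element $g_i$ against the top element $g_n$, whose leading word $u b_n$ may be a long word, and you locate the offending irreducible word $u b_n h_M$ at length $\len(u)+2$. The paper instead brackets two \emph{middle} elements against each other, say $[g_3,g_2]$: since $\bar g_j=b_j$ for $2\le j\le n-1$, every monomial involved is a single letter, the brackets $[b_k,b_l]$ and $[h_k,h_l]$ collapse to length one by the type-(2) and type-(3) relators, and the surviving leading term is an irreducible length-two word $b_i h_j$ that cannot lie in the chain \eqref{seqcase2}. This is why the paper needs two middle elements (hence $n\ge 4$) but gets away with an entirely elementary leading-term computation, whereas your version works with a single middle element at the cost of controlling brackets of long words. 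Your final step (that the leading word of any non-zero element of $\g$ must be one of the $\bar g_k$, and that $u b_n h_M$ is none of them) is correct as written.

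The one place where your proof is genuinely under-justified is the step you yourself flag as delicate. The bracket $[(u b_n),b_j]$ is not just $u b_n b_j$ plus shorter words: expanding it in the regular-word basis produces, besides the leading word $u b_n b_j$, further regular words of the \emph{same} length $\len(u)+2$, and the Gr\"obner reduction of $u b_n b_j$ likewise reintroduces length-$(\len(u)+2)$ words before anything ``collapses'' to length $\len(u)+1$. The same applies to $[w,b_j]$ for the lower monomials $w$ of $g_n$, some of which may themselves contain letters $h_k$. Your deg-lex bound on the leading words alone does not cover these companions. The gap is repairable: all such length-$(\len(u)+2)$ words have the same letter multiset as $u b_n b_j$ (resp.\ $w b_j$), so being lexicographically below $u b_n b_j$ they must first differ from it within the prefix $u b_n$, and since $u b_n h_M$ agrees with $u b_n b_j$ on exactly that prefix they are also lexicographically below $u b_n h_M$; for the lower monomials $w<_{lex} u b_n$ the first-difference argument does the same job. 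You should make this multidegree argument explicit (it is the analogue of the bookkeeping carried out in Lemma~\ref{L1} for Case 1); the paper's choice of brackets is designed precisely so that no such analysis is needed.
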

\begin{proof}
For all $i,j$, the bracket $[g_i,g_j]$ is an element of $\g$, that is, a linear combination of the $g_k$. In particular, its highest term is one of the words in the chain \eqref{seqcase2}. We check what happens when $2\leq j<i \leq n-1$.

To fix ideas, consider the case $j=2$ and $i=3$. We have:
\[ g_2 = b_2 + x_1 b_1 + \sum_{j\geq 1} y_j h_j, \text{  and  } g_3 = b_3 +  r_2b_2 + r_1b_1 + \sum_{j\geq 1} s_j h_j.\]
Taking the bracket $[g_3, g_2]$ and expanding linearly, one sees that all terms of the form $[b_i, b_j]$ or $[h_i,h_j]$ reduce to a term of length one using one of the words of the Gröbner-Shirshov basis. Since $g_2$ does not involve $b_3$, we see that the highest term that remains is of the form $y_j[b_3, h_j]$, for the highest index $j$ such that $s_j \neq 0$. It is clear that the word $b_3 h_j$ is not $G$-reducible, nor is it a word in the chain \eqref{seqcase2}. We must conclude that actually all the $y_j$ are zero.

Knowing that all the $y_j$ are zero, the highest term now becomes $-s_j[b_2, h_j]$, for the highest index $j$ such that $s_j \neq 0$. The same argument shows that all $s_j$ must be zero. So both $b_2$ and $b_3$ must be of the form given in the statement of the lemma, and it is clear that this argument works if we consider any bracket $[g_i,g_j]$ with $2 \leq j < i \leq n-1$.
\end{proof}

We conclude that $\g$ contains the subalgebra of $B$ generated by $g_2, \ldots, g_{n-1}$, which turns out to be the whole algebra $B$, since $b_1$ and $b_n$ are non-generators. So $\g = B$, because they are algebras of the same dimension.

\subsection*{Case 3:} $\bar{g}_n \bar{g}_1 = u a_k a_l v$  for some $k>l$.

\bigskip

The sequence is now
\begin{equation} \label{seqcase3}
a_1 v <_{lex} a_2 <_{lex} \cdots <_{lex} a_{n-1} <_{lex}  a_n,
\end{equation}
for some word $v$. There is no extra word multiplying $a_n$ on the left, as in case $2$, because $a_n$ is the highest letter in our alphabet, so $w=a_n$ is the unique regular word that ends with $a_n$.

In this case, the elements $g_i$, for $2 \leq i \leq n$, are linear combinations of the $a_j$, for $j \leq i$, the $b_j$ and the $h_j$, with leading term $a_i$. The same argument as in Lemma~\ref{elim.hj} shows that the coefficients of the $h_j$ and the $b_j$ are all zero, that is, $g_i$ is of the form $g_i = a_i + \sum_{j<i} x_j a_j$. We use similarly that $a_1$ is a non-generator of $A$ and conclude that $\g = A$.

\bigskip

In summary: any subalgebra $\g \subset L$ of dimension $n$ is a subalgebra of $H$, equal to $A$ or equal to $B$, according to whether it falls into Case 1, 2 or 3, respectively.

\begin{theorem}  \label{thm.cohopfian}
In the setting above, $L$ is cohopfian.
\end{theorem}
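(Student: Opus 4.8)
The plan is to take an arbitrary injective homomorphism $\varphi \colon L \to L$ and prove it is surjective, which makes $L$ cohopfian. The first observation I would record is that $L$ is generated, as a Lie algebra, by the subspace $A + B$. Indeed, the generators $a_i$ and $b_i$ all lie in $A + B$, while each generator $h_i$ is, by the relators of type (4), the regular bracketing of $a_n^2 b_1^i a_n b_1$, and hence lies in $\langle a_n, b_1 \rangle \subseteq \langle A + B \rangle$. Thus $\langle A + B \rangle = L$. Since $\varphi(\langle A + B \rangle) = \langle \varphi(A + B) \rangle$ for any homomorphism, it will suffice to show that $\varphi(A + B) = A + B$: this gives $\varphi(L) = \langle A + B \rangle = L$.

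To identify $\varphi(A)$ and $\varphi(B)$, I would apply the classification of $n$-dimensional subalgebras of $L$ established just above. As $\varphi$ is injective, both $\varphi(A)$ and $\varphi(B)$ are subalgebras of $L$ of dimension $n$, so each is a subalgebra of $H$, or equal to $B$, or equal to $A$. The first alternative is impossible for either one: if, say, $\varphi(A)$ were contained in $H$, then $\varphi$ would restrict to an embedding of $A$ into $H$, contradicting the hypothesis that $A$ does not embed in $H$; the same reasoning excludes $\varphi(B) \subseteq H$. Hence $\varphi(A)$ and $\varphi(B)$ each belong to $\{A, B\}$.

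It then remains to rule out $\varphi(A) = \varphi(B)$. Here I would use that $A \cap B = 0$ inside $L$: the one-letter words $a_i$ and $b_i$ are $G$-irreducible and hence part of the vector-space basis of $L$, so $A = \SPAN\{a_1, \dots, a_n\}$ and $B = \SPAN\{b_1, \dots, b_n\}$ meet only in $0$. Consequently $\dim(A + B) = 2n$, and by injectivity $\dim \varphi(A + B) = 2n$ as well. Since both $\varphi(A)$ and $\varphi(B)$ equal $A$ or $B$, the only configuration with $\dim(\varphi(A) + \varphi(B)) = 2n$ is $\{\varphi(A), \varphi(B)\} = \{A, B\}$, which gives $\varphi(A + B) = A + B$. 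Combined with the first step, $\varphi$ is surjective, hence an isomorphism, and $L$ is cohopfian.

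Essentially all of the genuine difficulty has been absorbed into the preceding case analysis classifying the $n$-dimensional subalgebras of $L$. Given that, the only load-bearing points here are the two structural facts it leaves open: that $A + B$ generates $L$, and that the non-embedding hypothesis on $A$ and $B$ is exactly what forbids their images from landing inside $H$. I therefore expect no serious obstacle beyond invoking these two facts correctly; no new computation should be required.
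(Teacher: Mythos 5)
Your proposal is correct and follows essentially the same route as the paper's own proof: identify $\varphi(A)$ and $\varphi(B)$ via the classification of $n$-dimensional subalgebras, exclude the subalgebra-of-$H$ case by the non-embedding hypothesis, use injectivity to see that the two images are distinct and hence $\{\varphi(A),\varphi(B)\}=\{A,B\}$, and conclude from the fact that $A$ and $B$ generate $L$. The paper states these steps more tersely; you have merely spelled out the justifications (the dimension count for $A\cap B=0$ and the type~(4) relators showing the $h_i$ lie in $\langle a_n,b_1\rangle$).
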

\begin{proof}
 Let $\varphi \colon L \to L$ be an injective homomorphism. The image $\varphi(A)$ is a subalgebra of $L$ of dimension $n$. As $A$ is not embeddable in $H$, we conclude that $\varphi(A) = A$ or $\varphi(A) = B$ . Similarly, $\varphi(B) = A$ or $\varphi(B) = A$. By injectivity, one of these images is $A$ and the other is $B$. Since $L$ is generated by $A$ and $B$, we find that $\varphi$ is surjective.
\end{proof}

\section{Embeddings into hopfian Lie algebras} \label{sec.emb.hopfian}
We prove now the second part of Theorem~\ref{thm.embedding}. So assume that $H$ is a countably generated Lie algebra that does not contain isomorphic copies of the nilpotent Lie algebras $A$ and $B$ satisfying the conditions of  the theorem.

Let $S$ be the non-abelian Lie algebra of dimension $2$. First of all, we will replace $H$ with $\widetilde{H} = (S \oplus K) \ast H$. This new algebra is again countably generated and does not contain isomorphic copies of $A$ and $B$ (by e.g. \cite[Corollary~3.1]{Feldman1983}). The effect of such replacement is that we can now easily choose an ordered basis $h_1, h_2, \ldots$ of $\widetilde{H}$ with the property that
 \[[h_1, h_2] = h_1 = [h_1, h_3].\]
Following the construction in Section~\ref{sec.embeddings}, we find a quotient $L$ of $A \ast B$ that is cohopfian and contains an isomorphic copy of $\widetilde{H}$, and thus of $H$ too.

 Let $\psi \colon L \to L$ be a surjective homomorphism. We wish to show that $\psi$ is an isomorphism. Let $m$ be an integer greater than the nilpotency class of both $A$ and $B$. The induced homomorphism $\bar{\psi} \colon L/L^m \to L/L^m$ is again surjective, so it is actually an isomorphism, since $L/L^m$ is finite-dimensional. Since $A$ and $B$ intersect $L^m$ trivially, as they survive in the nilpotent quotients $L/\llangle B \rrangle \simeq A$ and $L/\llangle A \rrangle \simeq B$,
 we find that $\psi |_A$ and $\psi|_B$ must be already injective. So both $\psi(A)$ and $\psi(B)$ are subalgebras of $L$ of dimension $n$, and we conclude as before that $\psi(A)=A$ and $\psi(B)=B$.

 The defining relations of $L$ impose strong constraints on the images of the generators $a_n$ and $b_1$ under homomorphisms.

 \begin{lemma}
  In the setting above, $\psi(a_n) \in K\cdot a_n$ and $\psi(b_1) \in K \cdot b_1$.
 \end{lemma}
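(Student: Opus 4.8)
The plan is to exploit the type-(4) relations $a_n^2 b_1^i a_n b_1 - h_i$, which express each basis element $h_i$ of $\widetilde{H}$ as the Shirshov word $w_i(a_n,b_1)$ of Example~\ref{words.wi} evaluated at the generators $a_n$ and $b_1$. Since $\psi(A)=A$ and $\psi(B)=B$, the elements $\alpha := \psi(a_n)$ and $\beta := \psi(b_1)$ lie in $A$ and in $B$ respectively, and both are nonzero because $\psi$ is injective on $A$ and on $B$. Applying $\psi$ to $h_i = w_i(a_n,b_1)$ gives $\psi(h_i) = w_i(\alpha,\beta)$. The strategy is to compute the leading word of $w_i(\alpha,\beta)$ in terms of the leading letters of $\alpha$ and $\beta$, and then to feed this into the identity $[h_1,h_2]=h_1$, which after applying $\psi$ reads $[w_1(\alpha,\beta),w_2(\alpha,\beta)] = w_1(\alpha,\beta)$.

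First I would record the leading word of $w_i(\alpha,\beta)$. Writing $\bar\alpha = a_p$ and $\bar\beta = b_q$ for the leading letters, repeated application of Lemma~\ref{LT.product} (using that every $a_k$ exceeds every $b_l$ lexicographically) shows that $\overline{w_i(\alpha,\beta)} = a_p^2 b_q^i a_p b_q$ in the free Lie algebra. The crucial dichotomy is whether this word is $G$-reducible: it matches the leading word of a type-(4) relator exactly when $(p,q) = (n,1)$, in which case it reduces to $h_i$, whereas in every other case inspection of the leading words of $G$ shows that $a_p^2 b_q^i a_p b_q$ contains no leading word of any relator as a subword, so it is $G$-irreducible of length $i+4$.

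With this in hand I would argue by contradiction, assuming $\alpha \notin K\cdot a_n$ or $\beta \notin K\cdot b_1$. If the leading letters are already wrong, i.e.\ $p < n$ or $q > 1$, then $\overline{\psi(h_i)} = a_p^2 b_q^i a_p b_q$ is $G$-irreducible of length $i+4$. If instead $(p,q)=(n,1)$ but $\alpha$ carries lower-order terms, write $\alpha = c_n a_n + \alpha'$ with $c_n \neq 0$ and $0 \neq \alpha' \in \SPAN(a_1,\dots,a_{n-1})$ (here $\beta = d_1 b_1$ is forced, as nothing lies below $b_1$ in $B$); expanding $w_i(\alpha,\beta)$ multilinearly, the all-$a_n$ contribution reduces to a multiple of $h_i$, while the term replacing the outermost $\alpha$ by $\alpha'$ survives with $G$-irreducible leading word $a_n^2 b_1^i a_{p'} b_1$, where $a_{p'} = \overline{\alpha'}$, and one checks this is the deg-lex largest surviving word, again of length $i+4$. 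Thus in all of these cases $\overline{\psi(h_i)}$ is $G$-irreducible of length $i+4$. Now Lemma~\ref{LT.product} applies to $[\psi(h_1),\psi(h_2)]$: since $\overline{\psi(h_1)} >_{lex} \overline{\psi(h_2)}$, the leading word of the bracket is the concatenation $\overline{\psi(h_1)}\,\overline{\psi(h_2)}$, which a direct check shows is $G$-irreducible of length $5+6=11$. This contradicts $[\psi(h_1),\psi(h_2)] = \psi(h_1)$, whose leading word has length $5$. Hence $\alpha \in K\cdot a_n$ and $\beta \in K\cdot b_1$.

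The main obstacle is the middle case above: when $\alpha$ has the correct leading letter $a_n$ but extra lower-order terms, the free-algebra leading word of $\psi(h_i)$ is reducible, and one must track which subleading terms survive reduction and verify that the defect word $a_n^2 b_1^i a_{p'} b_1$ is genuinely the leading word in $L$, with nonzero coefficient and no cancellation against the contributions of multiple $\alpha'$-substitutions. Once this bookkeeping is settled, the remainder is a routine comparison of lengths via Lemma~\ref{LT.product} together with the explicit list of leading words of $G$.
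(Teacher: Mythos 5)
Your proposal is correct and follows essentially the same route as the paper: apply $\psi$ to $h_1=a_n^2b_1a_nb_1$ and $h_2=a_n^2b_1^2a_nb_1$, use the relation $[h_1,h_2]=h_1$ together with Lemma~\ref{LT.product} to force the concatenated leading word to be $G$-reducible (pinning down the leading letters $a_n$, $b_1$), and then repeat the argument with the second-highest surviving word $a_n^2b_1^ia_{p'}b_1$ to exclude lower-order terms in $\psi(a_n)$. The only quibble is that this defect word arises from substituting $\alpha'$ for the occurrence of $\alpha$ in the right-hand factor $[\alpha,\beta]$ (the fourth letter), not the ``outermost'' one, but the word you identify is the correct one and the bookkeeping you flag is exactly what the paper also leaves implicit.
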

\begin{proof}
  Write $\psi(a_n) = \sum_j \lambda_j a_j$ and $\psi(b_1) = \sum_j \mu_j b_j$, and assume that $\overline{\psi(a_n)} = a_r$ and $\overline{\psi(b_1)} = b_s$. In $L$ we have \[h_1 = a_n^2 b_1 a_n b_1 = [[a_n, [a_n, b_1]], [a_n, b_1]].\]
 Let $P$ be the result of applying $\psi$ to the generators in the bracket above, that is:
 \[P \coloneqq [[\sum_j \lambda_j a_j, [\sum_j \lambda_j a_j, \sum_j \mu_j b_j]], [\sum_j \lambda_j a_j, \sum_j \mu_j b_j]]\]
 Then $P$ is equivalent to $\psi(h_1)$ in $L$. It is not hard to see that its highest term is $(\lambda_r^3 \mu_s^2) \cdot a_r^2 b_s a_r b_s$. Similarly, by applying $\psi$ to
 \[h_2 = a_n^2 b_1^2 a_n b_1 = [[a_n, [[a_n, b_1],b_1]], [a_n, b_1]],\]
 we find the element
 \[Q \coloneqq [[\sum_j \lambda_j a_j, [[\sum_j \lambda_j a_j, \sum_j \mu_j b_j],\sum_j \mu_j b_j]], [\sum_j \lambda_j a_j, \sum_j \mu_j b_j]],\]
which is equivalent to $\psi(h_2)$ and has highest term $(\lambda_r^3 \mu_s^3) \cdot a_r^2 b_s^2 a_r b_s$.

 As $f \coloneqq [h_1,h_2]-h_1$ is zero in $L$, the leading word of $[P,Q]-P$ must be $G$-reducible. Since
 $a_r^2 b_s a_r b_s >_{lex} a_r^2 b_s^2 a_r b_s$, we see that such word is
 $a_r^2 b_s a_r b_s a_r^2 b_s^2 a_r b_s$ by Lemma~\ref{LT.product}. This can only be reducible when $r = n$ and $s=1$. From that we deduce that $\psi(b_1) = \mu_1 b_1$ and that $\psi(a_n)$ has leading term $\lambda_n a_n$.

 We have proved that the leading word of $P$ is $a_n^2 b_1 a_n b_1$, which is equivalent to $h_1$ in $L$. Now suppose for contradiction that $\psi(a_n)$ is not a multiple of $a_n$ and let $a_m$ be the second highest term appearing with non-zero coefficient in the sum $\psi(a_n) = \sum_j \lambda_j a_j$.
 By replacing the leading term of $P$ with a multiple of $h_1$, we find a new element $\widetilde{P}$, equivalent to $P$ in $L$, with highest word $a_n^2 b_1 a_m b_1$ (the second highest appearing in $P$). Similarly, by replacing the highest term of $Q$ with a multiple of $h_2$, we find an equivalent element $\widetilde{Q}$ with highest word $a_n^2 b_1^2 a_m b_1$.

Now, since $a_n^2 b_1 a_m b_1 >_{lex} a_n^2 b_1^2 a_m b_1$, we have again by Lemma~\ref{LT.product} that $[\widetilde{P},\widetilde{Q}]-\widetilde{P}$ is equivalent in $L$ to an element with leading word $a_n^2 b_1 a_m b_1 a_n^2 b_1^2 a_m b_1$, which is $G$-irreducible. This is a contradiction, so $\psi(a_n) \in K \cdot a_n$.
\end{proof}

 We may now write $\psi(a_n) = \lambda a_n$ and $\psi(b_1) = \mu b_1$. Thus the action of $\psi$ on $\widetilde{H}$ is given by
 \[\psi(h_j) = \psi(a_n^2 b_1^j a_n b_1) = \lambda^3 \mu^{j+1} h_j\]
 for all $j \geq 1$. Since $[h_1, h_2] = [h_1, h_3] = h_1 \neq 0$, we find that $\lambda^6 \mu^5 = \lambda^6 \mu^6$, thus $\mu = 1$, and also $\lambda^6\mu^6 = \lambda^3 \mu^2$, so $\lambda^3 =1$. It follows that $\psi|_{\widetilde{H}} = \lambda^3 \cdot \mathrm{id}_{\widetilde{H}} = \mathrm{id}_{\widetilde{H}}$.

 Finally, we can see from the relations of $L$ that there is a well-defined homomorphism $\rho \colon L \to L$ determined by
 \[ \rho|_A = (\psi |_A)^{-1}, \  \ \rho|_B = (\psi |_B)^{-1} \  \ \text{ and } \  \ \rho|_{\widetilde{H}} = \mathrm{id},\]
 which clearly is an inverse for $\psi$. This completes the proof of Theorem~\ref{thm.embedding}.

\section*{Acknowledgements}
The author is grateful to F.Y. Yasumura for helpful discussions about the paper.

\end{document}